\documentclass[leqno,11pt]{amsart}
\newtheorem{thm}{Theorem}[section]
\newtheorem{cor}[thm]{Corollary}
\newtheorem{lemma}[thm]{Lemma}
\newtheorem{prop}[thm]{Proposition}

\theoremstyle{definition}

\theoremstyle{remark}

\newtheorem{notation}[thm]{Notation}

\renewcommand{\P}{\mathbb P}

\newcommand{\taub}{\mbox{\boldmath$\tau$}}

\newcommand{\ie}{i.~e.,\ }
\renewcommand{\Re}{\operatorname{Re}}

\begin{document}
\title[Extremal Polynomials]{On some classical problems concerning $L_{\infty}$-extremal polynomials with constraints}
\author{Franz Peherstorfer$^1$} 
\thanks{This work was supported
by the Austrian Science Fund FWF, project-number P20413-N18}
\thanks{$^1$The last modifications and corrections of this manuscript
were done by the author in the two months preceding this passing away
in November 2009. The manuscript remained unsubmitted and is not
published elsewhere (submitted by P. Yuditskii and I. Moale).}

\begin{abstract}
     First we consider the following problem which dates back to Chebyshev,
     Zolotarev and Achieser: among all trigonometric polynomials with given
     leading coefficients $a_0,...,a_l,$ $b_0,...,b_l \in \mathbb R$ find
     that one with least maximum norm on $[0,2 \pi].$ We show that the minimal
     polynomial is on $[0,2 \pi]$ asymptotically equal to a Blaschke product
     times a constant where the constant is the greatest singular value
     of the Hankel matrix associated with the $\tau_j = a_j + i b_j.$ As
     a special case corresponding statements for algebraic polynomials
     follow. Finally the minimal norm of certain linear functionals
     on the space of trigonometric polynomials is determined. As a consequence
     a conjecture by Clenshaw from the sixties on the behavior of the ratio
     of the truncated Fourier series and the minimum deviation is proved.
\end{abstract}

\maketitle

\section{Introduction}

In 1858 Chebyshev discovered that the polynomial
\begin{equation}
    2^{-n+1}T_n(x) = 2^{-n+1}\cos n \arccos x
    \label{eq-03}
\end{equation}
deviates least from zero with respect to the maximum norm on $[-1,1]$ among all
polynomials with leading coefficient one. Then he posed the following problem to
his circle: Let $l$ real numbers $A_0, A_1, \ldots, A_l$ be given. Among all
polynomials of degree less or equal $n$ with leading coefficients
$A_0, A_1, \ldots, A_l$, \ie of the form $\sum _{j=0}^l A_j x^{n-j} + q(x)$,
$q\in {\mathbb P}_{n-l-1},$ find that one which has least max-norm on $[-1,1]$,
that is, find the unique polynomial $\tilde{q}\in \P_{n-l-1}$ such that
\begin{equation}
    \min_{q\in \P_{n-l-1}}|| \sum _{j=0}^l A_jx^{n-j} + q(x)|| = || \sum _{j=0}^l
    A_jx^{n-j} + \tilde{q}(x)||
    \label{eq-01}
\end{equation}
where $||f||=\max_{x\in [-1,1]}|f(x)|$. This was the begin of a long story.
Indeed, ten years later Zolotarev, a student of Chebyshev, determined the
minimal polynomial in terms of elliptic functions when the first and second
coefficient is given. In 1930 Achieser gave a description of the minimal
polynomial in terms of automorphic functions when three leading coefficients
are given. In the words of Bernstein \cite[p. 156]{AKHG1} ``Akhieser
treated the more difficult problem and arrived at three algebraic equations
containing automorphic Schottky functions, whose solutions let to the
determination of the minimum deviation. Unfortunately these equations
are so complicated that it seems to be quite difficult to obtain simple
and sufficiently accurate inequalities''. Kolmogorov, Krein at al. also
mentioned in \cite[p. 233]{AKHG2} that the solution of the problem
was one of the significant contributions of Akhieser. Since in the explicit
representations there appear parameters given implicitly (as in Zolotarev's
representation the module of the elliptic functions) even for these two cases
there was (and is) still a demand for an asymptotic description in
elementary functions. Already in 1913 Bernstein himself
attacked the problem. For Zolotarev's case Bernstein succeeded in finding an
asymptotic solution of the minimum deviation, that is, of $E_n(A_0x^n +
A_1x^{n-1}) = \inf_{q\in \P_{n-2}}||A_0x^n + A_1x^{n-1} +q(x)||$ in terms of
elementary functions and also upper and lower bounds. In the sequel
he \cite{Ach} and later Achieser \cite{} obtained asymptotics of the
minimum deviation for some other special cases, for more recent results
on estimates of the minimum deviation see  Gutknecht and Trefethen \cite{GutTre},
where the error function is studied also, and Haussman and
Zeller \cite{Hau}. But neither Bernstein nor Achieser gave asymptotics
for the minimal polynomials in contrast to the $L_2$-norm where both
were main contributors in the development of an asymptotic theory.
Interesting enough the same holds for Szeg\H{o} the other great master
in asymptotics of orthogonal polynomials.

In the sixties N. N. Meiman \cite{Mei1, Mei2, Mei3} attacked the problem to
describe the minimal polynomial, called $Z_n$ in the following, when
$l,$ $l \in \mathbb N,$ coefficients are given.

By the Alternation Theorem $Z_n$ has at least $n-l$ alternation (abbreviated a-)
points on $[-1,1]$. Since every a-point from $(-1,1)$ is a critical point it
follows that the inverse image of $[-1,1]$ under $Z_n$ consists of at most
$l', $ $1 \leq l' \leq l+1,$ analytic arcs $\Gamma _j$, one of them,
say $\Gamma _0$ is the interval $[-1,1]$. Denoting the endpoints of the arcs by
$\alpha_{2,n}, \beta_{2,n}, \ldots, \alpha_{l',n}, \beta_{l',n}$ it
follows by the equioscillating property that the normed Zolotarev polynomial
$\tilde{Z}_n = Z_n / ||Z_n||$ satisfies
\begin{equation}
    \tilde{Z}_n^2 - 1 =  \left(\frac{\tilde{Z}_n'(x)}{\prod
    _{j=1}^{l'} (x-\gamma_{j,n})}\right)^2 \prod _{j=1}^{l'} (x-\alpha_{j,n})(x-\beta_{j,n})
    \label{eq-e0}
\end{equation}
that is,
\begin{equation}
    \frac{\tilde{Z}_n'^2}{\tilde{Z}_n^2 - 1} =
    \frac{\prod _{j=1}^{l'} (x-\gamma_{j,n})^2}
         {\prod _{j=1}^{l'} (x-\alpha_{j,n})(x-\beta_{j,n})}
    \label{eq-e1}
\end{equation}
where the $\gamma_{j,n}$'s are such that, $j = 0,...,l,$
\begin{equation}
         \int_{\alpha _{j,n}}^{\beta _{j,n}} \frac{\prod _{j=1}^{l'}
         (x-\gamma_{j,n})}{\sqrt{\prod _{j=1}^{l'} (x-\alpha_{j,n})
         (x-\beta_{j,n})}} dx =
         \frac{k_j\pi}{n}
        \label{eq-es4}
\end{equation}
where $k_j$ is the number of alternation points on the arc $\Gamma_j.$
\begin{equation}
   \tilde{Z}_n(t) =
   \pm \cosh (n \int_1^t
   \frac{\prod _{j=1}^{l'} (x-\gamma_{j,n})}
        {\sqrt{\prod_{j=1}^{l'} (x-\alpha_{j,n})(x-\beta_{j,n})}} dx).
   \label{eq-es6}
\end{equation}

Roughly speaking Meiman investigated in detail the precise number
$l'$ of arcs and gave a (more) detailed geometric description of
the arcs $\Gamma_j.$ For an explicit representation of the polynomial
$\tilde{Z}_n$ explicit expressions for the endpoints $\alpha_{j,n},$
$\beta_{j,n}$ and the zeros of the derivative $\gamma_{j,n}$ would
be needed. To find such explicit expressions is extremely unlikely
because one has to solve the system of hyperelliptic integrals
\eqref{eq-es4} and to find out how the endpoints of the arcs
are related to the given leading coefficients. In \cite{Mei1, Mei2, Mei3}
no way of solution is offered to this fundamental open question.

But let us observe that there is an interesting property of these points.
Since $\tilde{Z}_n$ has a finite number of zeros, precisely at most $l$
zeros, outside of $(-1,1)$ and since $\tilde{Z}_n$ is a minimal polynomial
on $\cup \Gamma_j,$ the length of each arc $\Gamma _j$, $j=1,\ldots,l$,
has to shrink to a point in the limit. Thus if we are interested in
asymptotics the problem reduces to find the connection between the $l$
given coefficients and the accumulation points of the $l'$-arcs or, in
other words, the $l'$-zeros of $\tilde{Z}_n$ lying outside $[-1,1];$
recall that every $\Gamma_j$ is a component of $\tilde{Z}^{-1}_n([-1,1]),$
for a description of inverse polynomial images see \cite{Peh4}.
To find this connection we proceed as follows. As usual we transform
the problem by the Joukowski-map to the complex plane such that the
interval $[-1,1]$ corresponds to the unit circle. Then we approximate
the polynomial of degree $l$ with the given $l$ coefficients by functions
from $H_\infty$ (so-called Caratheodory-Fejer approximation). This yields
a Blaschke product. Reflecting the Blaschke product at the unit circle it
turns out that its real part represents asymptotically the polynomial
$Z_n$ and, in particular, the zeros of the reflected Blaschke  product
are the limits of the arcs.

Roughly speaking we have shown that asymptotically there is a unique
correspondence between polynomials with $l$ fixed leading and of least
maximum norm on $[-1,1]$ and polynomials which vanish outside $[-1,1]$
at $l$ given points and are minimal on $[-1,1].$ Since we may expect
that outside $[-1,1]$ the polynomial grows exponentially fast we may
conclude that the minimal polynomial which vanishes at given $l$ points
represents asymptotically (up to a multiplication constant) every
polynomial satisfying in each of the $l$ given points any interpolation
condition (not depending on $n$). Indeed in this way we obtain asymptotic
representations of polynomials satisfying interpolation constraints
including constraints on the derivative. So far for special cases
asymptotics for the minimum deviation (but not for the minimal polynomial)
have been found by Bernstein \cite{}, see also \cite{}, $n$-th root
asymptotics has been derived by Fekete and Walsh \cite{FekWal}, see also
\cite{}.

Mostly it is more convenient to formulate the
problem in terms of Chebyshev polynomials, that is, to use the representation
\begin{equation}
    \sum _{j=0}^l A_jx^{n-j} = \sum _{j=0}^l a_jT_{n-j}(x) + q(x)
    \label{eq-02}
\end{equation}
$q\in \P_{n-l-1},$ where the first $l$ coefficients $a_j$ are given by the $A_j$'s.
In fact we will even study the more general problem of minimal trigonometric
polynomials with fixed leading coefficients; more precisely, denote by
${\mathcal T}_m = \{ \sum_{k = 0}^m a_k \cos k \varphi + b_k \sin k \varphi :
a_k,b_k \in \mathbb R\}$ the set of trigonometric polynomials of degree less
or equal $m,$ and let $a_0,...,a_l,b_0,...,b_l \in \mathbb R$ be given:
find the unique trigonometric polynomial
${\mathcal Z}_n(\varphi; a_0,...,a_l,b_0,...,b_l)$ for which
\begin{equation}\label{def_z}
\begin{aligned}
     & \min_{\substack{a_j,b_j \\ l+1 \leq j \leq n}}
       || \sum_{j = 0}^n a_j \cos (n - j)\varphi + b_j \sin (n - j)\varphi ||_{[0,2 \pi]} = \\
   = & ||{\mathcal Z}_n(\varphi; a_0,...,a_l,b_0,...,b_l) ||_{[0,2 \pi]}
\end{aligned}
\end{equation}
is attained; or in other words: given $\bar{\tau}_0 = a_0 - i b_0,...,$
$\bar{\tau}_l = a_l - i b_l \in \mathbb C$ find the unique polynomial
${\mathfrak Z}_n(z; \bar{\tau}_0,...,\bar{\tau}_l)$ of degree $n$ for which
\begin{equation}\label{pb}
     \min_{\substack{\tau_j \in \mathbb C \\ l+1 \leq j \leq n}} || {\rm Re} \{ \bar{\tau}_j e^{i(n - j)\varphi} \} || = \\
     || {\rm Re} \{ {\mathfrak Z}_n(e^{i \varphi}; \bar{\tau}_0,...,\bar{\tau}_l) \} ||
\end{equation}
is attained. Naturally
\begin{equation*}
     {\mathcal Z}_n(\varphi; \bar{\tau}_0,...,\bar{\tau}_l) :=
     {\mathcal Z}_n(\varphi; a_0,...,a_l,b_0,...,b_l) =
     {\rm Re} \{ {\mathfrak Z}_n(e^{i \varphi}; \bar{\tau}_0,...,\bar{\tau}_l) \}
\end{equation*}
Obviously when the $b_j$'s are zero then, using \eqref{eq-02}, we are back
in the algebraic case.

In the second part we consider linear functionals on the space of
truncated trigonometric polynomials and their applications, that is,
given $\mu_0,...,\mu_l \in \mathbb C$ how large can be the linear
functional $| \sum_{j = 0}^l \mu_{l - j} \tau_j |$ if
$|| {\rm Re} \{ \sum_{j = 0}^n \bar{\tau}_j e^{i(n - j)\varphi} \} ||$
$\leq 1.$ Note that the first $l + 1$ leading coefficients of the
trigonometric polynomial are given by $\bar{\tau}_0,\bar{\tau}_1,...,$
$\bar{\tau}_l$ and the remaining $n - l + 1$ coefficients are free
available. We will determine the least upper bound of
$|\sum_{j = 0}^l \mu_{l - j} \tau_j|$ for all $n \in \mathbb N.$

With the help of the solution of the problem just discussed we are able
to solve an old problem, see \cite{Cle,??} whether truncated Fourier
series can be used as a substitute for best approximations. A justification
of the method resulted in the conjecture of Clenshaw that
$|\sum_{j = 0}^l \tau_j \cos j \varphi|/E_n(f)$ is bounded by Landau's
constant as $n \to \infty.$ The articles by Clenshaw, Lam and Elliot and
Talbot \cite{Cle, LamEll, Tal} are devoted to show numerically that Clenshaw's
conjecture hold, at least for small $l,$ i.e. for $l = 1,2,3,4$ respectively
by giving algorithm for larger $l,$ for details concerning open and solved
questions of Clenshaw's conjecture, see \cite[p. 275]{Tal}. In contrast to
p. 275 the last but one paragraph in the introduction in \cite{Tal} is
misleading with this respect, partly incorrect, as the statement
``The first published proof of Clenshaw's conjecture was given by Lam and
Elliot \cite{LamEll} in 1972.'' There is no proof in \cite{LamEll} as mentioned
in \cite[p. 275]{Tal} also. The conjecture will follow as an easy consequence
of our derivations.

\section{Main Theorem}

%\begin{thm}
%Let $\tau _0,\tau_1, \ldots, \tau _l \in \R$ and let $|\mu|$ be the modulus
%of the greatest zero of the polynomial
%\[
%  \Delta (\lambda ):= \left| \begin{array}{lllcll}\tau _l-\lambda
%      & \tau _{l-1}
%      &\tau _{l-2} & \cdots &\tau _1 & \tau _0 \\
%  \tau _{l-1}
%      & \tau _{l-2} - \lambda & \tau _{l-3} & \cdots &\tau _2 &\tau _1\\
%        \multicolumn{6} {c} \dotfill \\
%  \tau _0
%      & {} & {} & \cdots  & {} & -\lambda \end{array} \right| \]
%and suppose that there are $\nu +1$ zeros of $\Delta (\lambda )$ with modulus
%$|\mu|$. Furthermore let $p_{l-\nu}$ be the polynomial of degree $l-\nu$, which
%has all zeros in $|z|<1$, such that
%\begin{equation}
%    \frac{1}{\mu}\frac{p_{l-\nu}(z)}{p_{l-\nu}^*(z)} = \tau_0 + \tau _1z
%    + \tau _2z^2 + \ldots + \tau _{l-\nu} z^{l-\nu}+ O(z^{l-\nu+1}).
%    \label{eq-th1}
%\end{equation}
%Then the trigonometric Zolotarev polynomial is asymptotically given on
%$[0,2\pi]$ by, $z=e^{i\varphi},$
%\begin{equation}
%    Z_n(\varphi,\bar{\tau} _0, \ldots, \bar{\tau} _l) = \frac{1}{\mu}\Re \{z^n
%    \frac{p_l^*(z)}{p_l(z)}\} + O(?)
%    \label{eq-th2}
%\end{equation}
%and
%\begin{equation}
%    ||Z_n(x;\tau _0,\ldots,\tau _l)|| \simeq \frac{1}{\mu}
%    \label{eq-th3}
%\end{equation}
%
%\end{thm}

%\begin{remark}
%Note that $p_{l-\nu}$ from \eqref{eq-th1} satisfies in fact the relation
%\begin{equation}
%    \frac{1}{\mu}\frac{p_{l-\nu}(z)}{p_{l-\nu}^*(z)} =
%    \sum_{j=0}^l \bar{\tau} _jz^j + O(z^{l+1})
%    \label{eq-th4}
%\end{equation}
%\end{remark}

\begin{thm}\label{three}
Let $p_l(z) = z^l + ...$ be a polynomial of degree $l$ which has all its zeros in
$|z|<1$ and the expansion at $z=0$
\begin{equation}
     \frac{p_l(z)}{p_l^*(z)} = \tau_0 + \tau _1z + \tau_2 z^2 +
     \ldots + \tau_m z^m + O(z^{m+1})
     \label{eq-th5}
\end{equation}
where $m \geq l.$ Then on $[0,2\pi]$ the trigonometric Zolotarev polynomials
of degree $n$ with leading coefficients $\bar{\tau}_0, ... ,\bar{\tau}_k,$
$k = l,...,m,$ $k \leq n,$ are given asymptotically by
\begin{equation}
    \mathcal{Z}_n(\varphi; \bar{\tau} _0,\bar{\tau} _1, \ldots ,\bar{\tau} _k)
    = \Re \{ z^n \frac{p_l^*(z)}{p_l(z)}\} + O(\tilde{r}^n),
    \label{eq-th6}
\end{equation}
where $\tilde{r}>r:=\max\{|z_j|: z_j \;\text{zero of}\;\; p_l\}$ and the constant
in the $O( \ )$ term does not depend on $n.$ Moreover
\begin{equation*}
     || {\mathcal Z}_n(\varphi; \bar{\tau}_0,\bar{\tau}_1,...,\bar{\tau}_k) || \sim 1
\end{equation*}
with geometric convergence.
\end{thm}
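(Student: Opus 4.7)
The plan is to exhibit an explicit near-extremal trigonometric polynomial $R_n$ built from the reflected Blaschke product $z^n p_l^*(z)/p_l(z)$, prove the matching upper/lower norm bounds via a de la Vall\'ee Poussin-type alternation argument, and bootstrap to the pointwise asymptotic.

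\emph{Step 1 (Candidate from the Blaschke product).}
Since $p_l$ has all its zeros in $|z|\le r<1$, the reciprocal $p_l^*$ has none in $|z|<1/r$, so the Blaschke quotient $B(z):=p_l(z)/p_l^*(z)$ is analytic in $|z|<1/r$ and Cauchy's estimates give $|\tau_j|=O(\tilde r^{\,j})$ for any $\tilde r>r$. On $|z|=1$ the identity $1/B=\overline B$ furnishes, for $z=e^{i\varphi}$,
\[
\frac{z^n p_l^*(z)}{p_l(z)}=e^{in\varphi}\,\overline{B(e^{i\varphi})}=\sum_{j\ge 0}\bar\tau_j e^{i(n-j)\varphi}.
\]
Truncating at $j=n$ produces a trigonometric polynomial $R_n(\varphi)=\sum_{j=0}^n[a_j\cos((n-j)\varphi)+b_j\sin((n-j)\varphi)]$ of degree $\le n$ with exactly the prescribed leading coefficients $\bar\tau_0,\dots,\bar\tau_k$ (using \eqref{eq-th5} and $k\le m$), and a tail estimate $\|R_n-\Re\{z^np_l^*/p_l\}\|_\infty\le\sum_{j>n}|\tau_j|=O(\tilde r^n)$.

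\emph{Step 2 (Two-sided norm estimate).}
Unimodularity of $e^{in\varphi}\overline{B(e^{i\varphi})}$ yields $|\Re\{z^np_l^*/p_l\}|\le 1$ on the circle, hence $\|R_n\|_\infty\le 1+O(\tilde r^n)$ and, via admissibility in \eqref{def_z}, $\|\mathcal Z_n\|_\infty\le 1+O(\tilde r^n)$. For the reverse estimate I use that $\varphi\mapsto e^{in\varphi}\overline{B(e^{i\varphi})}$ has winding number $n-l$, so $f_n:=\Re\{z^np_l^*/p_l\}$ equioscillates between $+1$ and $-1$ at $2(n-l)$ points $\varphi_1<\cdots<\varphi_{2(n-l)}$ with alternating signs $\varepsilon_i$. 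Assuming $\|\mathcal Z_n\|_\infty<1-C\tilde r^n$ for $C$ large, the bounds $R_n(\varphi_i)=\varepsilon_i(1+O(\tilde r^n))$ and $|\mathcal Z_n(\varphi_i)|\le 1-C\tilde r^n$ force $\varepsilon_i(\mathcal Z_n-R_n)(\varphi_i)<0$ at every $\varphi_i$. Since the top $k+1$ coefficients of $\mathcal Z_n$ and $R_n$ coincide, $\mathcal Z_n-R_n$ is a trigonometric polynomial of degree $\le n-k-1$ with at most $2(n-k-1)$ zeros, contradicting the $\ge 2(n-l)-1\ge 2(n-k)-1$ sign changes above. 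Therefore $\|\mathcal Z_n\|_\infty\ge 1-O(\tilde r^n)$ and $\|\mathcal Z_n\|_\infty\sim 1$ with geometric convergence.

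\emph{Step 3 (Uniform polynomial asymptotic and main obstacle).}
To upgrade the norm statement to the pointwise assertion \eqref{eq-th6}, I will show $\|e_n\|_\infty=O(\tilde r^n)$ with $e_n:=\mathcal Z_n-R_n$. Chebyshev alternation supplies $\ge 2(n-k)$ extrema $\psi_j$ of $\mathcal Z_n$ where $\mathcal Z_n(\psi_j)=\eta_j(1+O(\tilde r^n))$ with $\eta_j=\pm1$ alternating; a perturbation argument near the simple extrema of $f_n$ will place the $\psi_j$ exponentially close to the $\varphi_j$'s, whence $|e_n(\psi_j)|=O(\tilde r^n)$ at $2(n-k)\ge 2\deg e_n+2$ well-spaced points, and a Lagrange-interpolation bound (Lebesgue constant polynomial in $n$) then yields the uniform estimate $\|e_n\|_\infty=O(\tilde r^n)$; combined with the tail estimate in Step 1 this gives \eqref{eq-th6}. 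The main technical obstacle is precisely this final step, namely converting coincidence of alternation structure into a uniform sup-norm bound. A conceptually cleaner route, foreshadowed by the introduction, is to invoke Adamyan--Arov--Krein/Carath\'eodory--Fej\'er theory: the truncated Zolotarev norm equals the largest singular value of the Hankel matrix built from $(\tau_j)$, its unique extremizer is a Blaschke product, and matching of Taylor data to order $m\ge l$ forces this Blaschke product to coincide with $p_l/p_l^*$ up to $O(\tilde r^n)$, delivering \eqref{eq-th6} directly.
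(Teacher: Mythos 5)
Your Steps 1 and 2 are correct and coincide in substance with the paper's setup: the candidate $R_n=\Re\{z^np_l^*(z)/p_l(z)\}$ (equivalently its truncation), the exponentially small tail, the equioscillation of $R_n$ at the $2(n-l)$ zeros of $S_n=\Im\{z^np_l^*/p_l\}$ (Lemma \ref{proofslemma2.2}), and the de la Vall\'ee Poussin/zero-counting argument giving $\|\mathcal Z_n\|=1+O(\tilde r^{\,n})$. The gap is exactly where you flag it, in Step 3, and it is not closed by either of the two routes you sketch. The perturbation claim that the alternation points $\psi_j$ of $\mathcal Z_n$ lie exponentially close to the $\varphi_i$ is essentially equivalent to the conclusion \eqref{eq-th6} and cannot be assumed; moreover, even granting it, the information actually available at the $\psi_j$ is only one-sided: from $|\mathcal Z_n(\psi_j)|=\|\mathcal Z_n\|\ge 1-O(\tilde r^{\,n})$ and $|R_n|\le 1$ one gets $\eta_j\,e_n(\psi_j)\ge -O(\tilde r^{\,n})$ with alternating signs $\eta_j$, \emph{not} the two-sided bound $|e_n(\psi_j)|=O(\tilde r^{\,n})$ that your Lagrange-interpolation step requires. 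The closing appeal to Adamyan--Arov--Krein/Carath\'eodory--Fej\'er also does not help here: that theory solves the $H^\infty$ problem on the disk, and transferring it to the polynomial problem on the circle is precisely the content of the theorem, not a citable fact.

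The paper's device for converting the one-sided alternation information into a uniform bound is the \emph{strong unicity constant}. Writing $R_n=V_n+\psi_n$ with $V_n\in\mathfrak T_n$ carrying the prescribed leading coefficients and $\|\psi_n\|=O(\tilde r^{\,n})$ (Lemma \ref{lemma2.4}), Proposition \ref{prop4.1} gives the stability estimate $\|\tilde t_{j(n)}-t_{j(n)}\|\le(1+2/\gamma)\,\|\psi_n\|$, where $\gamma=\gamma(R_n;\mathfrak T_{n-k-1})$ is defined in \eqref{eq-pl6}; the whole burden then falls on Lemma \ref{lemma2.5}, which proves $1/\gamma(R_n;\mathfrak T_{n-l-1})=O(n)$ by constructing the Cline polynomials $g_{k,n}$ of \eqref{eq-pl7} explicitly from the F\'ejer--Riesz factors of the numerators of $R_n$ and $S_n$ and bounding them via Bernstein's inequality. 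This yields $\|\mathcal Z_n-R_n\|=O(n\tilde r^{\,n})=O(\tilde{\tilde r}^{\,n})$ and hence \eqref{eq-th6}. Some quantitative substitute of this type (a strong unicity bound, or an explicit dual/interpolation construction at the alternation set with controlled Lebesgue constant \emph{and} two-sided data) is the missing idea; without it your argument establishes the norm asymptotics but not the pointwise representation \eqref{eq-th6}.
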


Next let us show that condition \eqref{eq-th5} is satisfied always, because
of the following Theorem which goes back to Caratheodory and Fejer \cite{}
and Schur \cite{Sch}.

\begin{thm}\label{Sch}{\rm (}\cite{Sch}{\rm)}
Let $\tau_0,...,\tau_m \in \mathbb C$ be given. Then there exists a
$l \in {\mathbb N}_0,$ $0 \leq l \leq m,$ a polynomial $p_l(z)$ of
degree $l$ and a $\gamma \in \mathbb C$ such that $p_l(z) = z^l + ...$
has all zeros in $|z| < 1$ and
\begin{equation}\label{eq-schur}
   \gamma \frac{p_l(z)}{p_l^*(z)} = \tau_0 + ... + \tau_m z^m + O(z^{m + 1}),
\end{equation}
where $\gamma$ is the zero of largest modulus of
\[
  D_{l + 1} (\lambda ):=
  \left|
  \begin{array}{cccccccl}
  \lambda      & 0                 & \cdots & 0            & \tau _0 & \tau_1  & \cdots & \tau_l     \\
  0            & \lambda           & \cdots & 0            & 0       & \tau_0  & \cdots & \tau_{l-1} \\
  \multicolumn{6} {c} \dotfill                                                                       \\
  0            & 0                 & \cdots & \lambda      & 0       & 0       & \cdots & \tau_0     \\
  \bar{\tau}_0 & 0                 & \cdots & 0            & \lambda & 0       & \cdots & 0          \\
  \bar{\tau}_1 & \bar{\tau}_0      & \cdots & 0            & 0       & \lambda & \cdots & 0          \\
  \multicolumn{6} {c} \dotfill                                                                       \\
  \bar{\tau}_l & \bar{\tau}_{l -1} & \cdots & \bar{\tau}_0 & 0       & 0       & \cdots & \lambda    \\
  \end{array}
  \right|
\]
and $D_j(\gamma) \neq 0$ for $j = 1,...,l.$ If $\tau_0,...,\tau_l \in \mathbb R$
then
\begin{equation*}
    D_{l + 1}(\lambda) = \Delta(\lambda) \Delta(- \lambda)
\end{equation*}
where
\[
  \Delta (\lambda ):=
  \left|
  \begin{array}{lllcll}
     \tau _l-\lambda & \tau _{l-1}           & \tau _{l-2} & \cdots & \tau _1 & \tau _0 \\
     \tau _{l-1}     & \tau _{l-2} - \lambda & \tau _{l-3} & \cdots & \tau _2 & \tau _1 \\
     \multicolumn{6} {c} \dotfill                                                       \\
     \tau _0         & {}                    & {}          & \cdots & {}      & -\lambda
  \end{array}
  \right|
\]
is the characteristic polynomial of the associated Hankel matrix.
\end{thm}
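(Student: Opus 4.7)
This is the classical Caratheodory--Fejer--Schur theorem; the plan is to combine a variational argument in $H^\infty$ with a block-matrix eigenvalue calculation.

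The first step is to consider the extremal problem
\[
\gamma_0 := \inf\{\norm{f}_\infty : f \in H^\infty(\D),\ f(z) = \tau_0 + \tau_1 z + \cdots + \tau_m z^m + O(z^{m+1})\},
\]
and observe, via weak-$*$ compactness of the unit ball of $H^\infty$, that the infimum is attained. A standard extreme-point argument, combined with a singular-inner-factor exchange (a singular inner factor could be perturbed without disturbing the first $m+1$ Taylor coefficients, contradicting minimality), identifies any extremal $f_0/\gamma_0$ as a finite Blaschke product whose number of zeros is controlled by the $m+1$ linear constraints. Hence $f_0 = \gamma_0\, p_l/p_l^*$ for a monic $p_l$ of some degree $l \leq m$ with all zeros in $\D$.

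The algebraic heart of the proof is the identification of $\gamma_0$ as the largest-modulus root of $D_{l+1}(\lambda)$. Writing out the interpolation identity $\gamma\, p_l(z) \equiv p_l^*(z) \sum_{j = 0}^m \tau_j z^j \pmod{z^{l+1}}$ together with its complex-conjugate companion in coordinates produces a homogeneous linear system in the $2(l+1)$ coefficients of $p_l$ and $p_l^*$ whose matrix is exactly the one defining $D_{l+1}(\lambda)$, with $\lambda$ replaced by $\gamma$; non-triviality forces $D_{l+1}(\gamma_0) = 0$. Conversely, the Schur-complement formula gives
\[
D_{l+1}(\lambda) = \det(\lambda^2 I - T^*T),
\]
where $T$ is the $(l+1)\times(l+1)$ upper triangular Toeplitz matrix with entries $\tau_{j-i}$, so the roots of $D_{l+1}$ are $\pm s_j(T)$. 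Minimality of $\gamma_0$ then pins down $\gamma_0 = \norm{T}$ (the largest singular value), i.e.\ the largest-modulus root. The non-degeneracy $D_j(\gamma_0) \neq 0$ for $j \leq l$ reflects the exact-degree clause on $p_l$, i.e.\ that the Blaschke product cannot be realized with a smaller number of factors.

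For the real case, $T^* = T^T$; the classical identity $JTJ = T^T$ (with $J$ the anti-diagonal flip) makes $H := TJ$ symmetric and gives $TT^* = JH^2J$, so that
\[
D_{l+1}(\lambda) = \det(\lambda^2 I - H^2) = \det(\lambda I - H)\det(\lambda I + H) = \Delta(\lambda)\Delta(-\lambda)
\]
up to a sign that is reconciled by inspecting the diagonal of the Hankel matrix defining $\Delta$. The main obstacle will be the second step, i.e.\ passing from an abstract $H^\infty$-extremal to a finite Blaschke product of the stated degree; this requires the structure theory of extreme points of the unit ball of $H^\infty$ together with careful bookkeeping of the finitely many interpolation constraints.
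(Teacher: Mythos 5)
First, a point of comparison: the paper does not prove this statement at all --- it is quoted as a classical result of Carath\'eodory--Fej\'er and Schur \cite{Sch} and used as a black box to guarantee that hypothesis \eqref{eq-th5} of Theorem \ref{three} can always be met. So there is no ``paper proof'' for your argument to track; what you have written is a reconstruction of the classical proof and must be judged on its own. Your overall architecture (existence of an $H^\infty$ extremal by weak-$*$ compactness; identification of the extremal as a constant times a finite Blaschke product; the Schur-complement identity $D_{l+1}(\lambda)=\det(\lambda^2I-T^*T)$; the flip $H=TJ$ reducing the real case to the characteristic polynomial of the Hankel matrix) is the standard one, and the two algebraic computations are sound up to the sign bookkeeping you already flag.

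Two steps are genuinely incomplete as described. First, the passage from ``extremal'' to ``finite Blaschke product'' cannot be run through the extreme points of the unit ball of $H^\infty$: by de Leeuw--Rudin these are exactly the $f$ with $\int_0^{2\pi}\log(1-|f(e^{i\varphi})|)\,d\varphi=-\infty$, a class far larger than the inner functions, and being a minimizer of the coefficient problem is in any case not the same as being an extreme point of the ball; your ``singular-inner-factor exchange'' presupposes that the extremal is already inner. The classical repair is duality with $H^1$ (precisely the Riesz--Sz\'asz mechanism the paper invokes in its Section 5): the alignment between the extremal $f_0$ and the dual extremal $k\in H^1$ forces $|f_0|=\gamma_0$ a.e.\ on $|z|=1$, and the finitely many interpolation constraints force $k$ to be $z^{m+1}$ times a squared polynomial times $\overline{f_0/\gamma_0}$, whence $f_0/\gamma_0$ is a Blaschke product of degree at most $m$; alternatively Schur's algorithm, with solvability criterion $\gamma^2I-T^*T\ge0$ degenerating at $\gamma=\|T\|$, produces the finite Blaschke product with no function theory at all. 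Second, you identify $|\gamma_0|$ with the norm of the $(l+1)\times(l+1)$ triangular Toeplitz matrix, but the extremal problem constrains $m+1\ge l+1$ coefficients, so a priori $\gamma_0=\|T_{m+1}\|\ge\|T_{l+1}\|$ by interlacing of singular values; the equality of the two --- i.e.\ the fact that $\tau_0,\dots,\tau_l$ alone already determine $\gamma$ and $p_l$, which is what the clause $D_j(\gamma)\ne0$ for $j\le l$ encodes --- requires the uniqueness half of the theorem applied to the truncated data and is not automatic. With these two points supplied, the plan does prove the statement.
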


Combining Theorem \ref{three} and Theorem \ref{Sch} we obtain
that for given $\tau_0,...,$ $\tau_m \in \mathbb C$ there exists
a $l \in {\mathbb N}_0,$ $0 \leq l \leq m$ and $\gamma \in \mathbb C$
such that for $\varphi \in [0,2 \pi]$ and for $k = l,...,m$
\begin{equation}\label{eq-x1}
   {\mathcal Z}_n(\varphi; \bar{\tau}_0,...,\bar{\tau}_k) =
   |\gamma| {\rm Re} \{ e^{i \arg \gamma} z^n \frac{p_l^*(z)}{p_l(z)} \} +
   O(\tilde{r}^n),
\end{equation}
where the constant in the $O( \ )$ term does not depend on $n,$
and
\begin{equation*}
    || {\mathcal Z}_n(\varphi; \bar{\tau}_0,...,\bar{\tau}_k) || \sim |\gamma|
\end{equation*}
where the convergence is geometric. $l,$ $p_l$ and $\gamma$ are
determined by the above Theorem.

Thus the problem of determining the trigonometric and algebraic
Zolotarev polynomials with an arbitrary given number of leading
coefficients with respect to the maximum norm is completely
solved asymptotically by the above Theorems.

We mention that by Fej\'{e}r \cite{} the condition
$\tau_l \geq \tau_{l - 1} \geq ... \geq \tau_1 \geq \tau_0 > 0$
implies that $l = m$ in Theorem \cite{CarFej}
and that the coefficients of $p_l(z) = \sum_{\nu = 0}^l a_\nu z^{l - \nu}$
satisfy $a_0 \geq a_1 \geq ... \geq a_l \geq 0.$

For given ${\boldsymbol \tau}$ only a few explicit values of
$|\gamma({\boldsymbol \tau})| =: \tilde{\gamma}({\boldsymbol \tau})$
are known, for instance
\begin{equation*}
    \tilde{\gamma}((1,...,1)) = 1/2 \sin \left( \frac{\pi}{2(2 l + 3)} \right)
\end{equation*}
which gives by \eqref{eq-x1} that
\begin{equation*}
\begin{split}
    & {\mathcal Z}_n(\varphi; (1,...,1)) = \frac{1}{2 \sin \left( \frac{\pi}{2(2 l + 3)} \right)} . \\
    & \qquad \qquad \qquad \qquad {\rm Re \ }
      \left\{
            \frac{\sin \frac{(l + 1)\pi}{2 l + 3} + \sin \frac{l\pi}{2 l + 3}z + \cdots + \sin \frac{\pi}{2 l + 3}z^l}
                 {\sin \frac{\pi}{2 l + 3} + \sin \frac{2 \pi}{2 l + 3}z + \cdots + \sin \frac{(l + 1)\pi}{2 l + 3}z^l }
      \right\}
\end{split}
\end{equation*}

%%%%%%%%%%%%%%%%%%%%%%%%%%%%%%%%%%%%%%%%%%%%%%%%%%%%%%%%%%%%%%%%%%%%%%%%%%%%%%%%%%%%%%%%%%%%%%%%%%%
\section{Proofs}

We will prove the following more general version of
Theorem \ref{three}:

\begin{thm}\label{three-modified}
Let $(n_\nu)$ be a subsequence of $\mathbb N$ and let
$p_{l,n_\nu}(z) = z^l + ...$ be such that
\begin{equation}\label{m1}
       \frac{p_{l,n_\nu}(z)}{p_{l,n_\nu}^*(z)} =
       \tau_{0,n_\nu} +
       \tau _{1,n_\nu} z +
       \ldots +
       \tau _{m,n_\nu} z^m +
       O(z^{m+1}),
\end{equation}
where $m$ is independent of $n_\nu$ and $m \leq l,$ and that
\begin{equation}\label{x}
       p_{l,n_\nu}(z) \underset {n_\nu \to \infty} \longrightarrow p_{l}(z),
       \ {\rm where \ } p_l(z) \ {\rm has \ all \ zeros \ in \ } |z| \leq r < 1.
\end{equation}
Then for $\varphi \in [0,2\pi]$ and $k = l,...,m,$
\begin{equation}\label{m2}
    \begin{aligned}
       {\mathcal Z}_{n_\nu}(\varphi; \bar{\tau}_{0,n_\nu}, ... , \bar{\tau}_{k,n_\nu})
       & = \Re \{ z^{n_\nu} \frac{p_{l, n_\nu}^*(z)}{p_{l, n_\nu}(z)} \} +
         O(\tilde{r}^n) \\
       & = \Re \{ z^{n_\nu} \frac{p_l^*(z)}{p_l(z)} \} +
         O(\tilde{\tilde{r}}^n) \\
    \end{aligned}
\end{equation}
where $\tilde{r} > r$ and the constant in the $O( \ )$ term does not depend on $n.$
Furthermore,
\begin{equation}\label{m2}
      ||
         {\mathcal Z}_{n_\nu}(\varphi;
         \bar{\tau}_{0,n_\nu}, ... ,\bar{\tau}_{k,n_\nu})
      || \sim 1.
\end{equation}
\end{thm}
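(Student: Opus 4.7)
The plan is to build a concrete candidate $T_{n_\nu}$ from the reflected Blaschke product $p^*_{l,n_\nu}/p_{l,n_\nu}$, verify it is a near-optimal competitor for the Zolotarev problem via unimodularity on the unit circle, and then identify it with $\mathcal{Z}_{n_\nu}$ up to a geometrically small error by invoking Chebyshev's alternation theorem. Since $p_{l,n_\nu}/p^*_{l,n_\nu}$ is a Blaschke product of degree $l$ with zeros in $|z|\le r<1$, the Taylor series (\ref{m1}) converges absolutely on the closed unit disk with $|\tau_{j,n_\nu}|=O(\tilde r^{\,j})$ for any fixed $\tilde r>r$. The identity $p^*_{l,n_\nu}/p_{l,n_\nu}=\overline{p_{l,n_\nu}/p^*_{l,n_\nu}}$ on $|z|=1$ then yields
\[
    z^{n_\nu}\frac{p^*_{l,n_\nu}(z)}{p_{l,n_\nu}(z)}
    \;=\;\sum_{j\ge 0}\bar\tau_{j,n_\nu}\,z^{n_\nu-j},\qquad |z|=1,
\]
and truncation at $j=n_\nu$ defines the admissible trigonometric polynomial
\[
    T_{n_\nu}(\varphi):=\sum_{j=0}^{n_\nu}\Re\bigl\{\bar\tau_{j,n_\nu}\,e^{i(n_\nu-j)\varphi}\bigr\}
\]
of degree $n_\nu$ with exactly the prescribed leading coefficients $\bar\tau_{0,n_\nu},\dots,\bar\tau_{k,n_\nu}$. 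The geometric decay of the Taylor coefficients gives $T_{n_\nu}=\Re\{z^{n_\nu}p^*_{l,n_\nu}/p_{l,n_\nu}\}+O(\tilde r^{\,n_\nu})$ uniformly in $\varphi$; since $|p^*_{l,n_\nu}/p_{l,n_\nu}|\equiv 1$ on the circle, $\|T_{n_\nu}\|_\infty\le 1+O(\tilde r^{\,n_\nu})$, whence $\|\mathcal{Z}_{n_\nu}\|_\infty\le 1+O(\tilde r^{\,n_\nu})$ by minimality.

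The matching lower bound is a winding-number argument: as a self-map of $|z|=1$ the function $z^{n_\nu} p^*_{l,n_\nu}/p_{l,n_\nu}$ has winding number $n_\nu-l$, so its real part hits $\pm 1$ at exactly $2(n_\nu-l)$ cyclically alternating extrema, and $T_{n_\nu}$ inherits this sign pattern with extrema $\pm 1+O(\tilde r^{\,n_\nu})$. Since $T_{n_\nu}$ and $\mathcal{Z}_{n_\nu}$ agree in their first $k+1\ge l+1$ Fourier coefficients, $T_{n_\nu}-\mathcal{Z}_{n_\nu}$ is a trigonometric polynomial of degree $\le n_\nu-l-1$ admitting at most $2(n_\nu-l-1)$ zeros on $[0,2\pi)$. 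Were $\|\mathcal{Z}_{n_\nu}\|$ strictly below the near-amplitude $1-O(\tilde r^{\,n_\nu})$ of $T_{n_\nu}$, the $2(n_\nu-l)$ alternation points would force $2(n_\nu-l)$ sign changes in $T_{n_\nu}-\mathcal{Z}_{n_\nu}$, a contradiction. Hence $\|\mathcal{Z}_{n_\nu}\|_\infty\sim 1$ with geometric convergence.

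The technical crux is promoting this two-sided norm estimate into the uniform pointwise identification $\mathcal{Z}_{n_\nu}=T_{n_\nu}+O(\tilde r^{\,n_\nu})$. Chebyshev's theorem provides $\mathcal{Z}_{n_\nu}$ with its own $\ge 2(n_\nu-k)+1\ge 2(n_\nu-l)+1$ equioscillation nodes $\varphi^*_i$ at which $\mathcal{Z}_{n_\nu}(\varphi^*_i)=(-1)^i\|\mathcal{Z}_{n_\nu}\|$; combined with $|T_{n_\nu}|\le 1+O(\tilde r^{\,n_\nu})$ this gives the one-sided alternating bounds $(-1)^i(T_{n_\nu}-\mathcal{Z}_{n_\nu})(\varphi^*_i)\le O(\tilde r^{\,n_\nu})$, while the symmetric argument at the extrema $\psi_j$ of $T_{n_\nu}$ produces the complementary family $(-1)^{j+1}(T_{n_\nu}-\mathcal{Z}_{n_\nu})(\psi_j)\le O(\tilde r^{\,n_\nu})$. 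Since both node systems cluster within $O(\tilde r^{\,n_\nu/2})$ of the extrema of $\Re\{z^{n_\nu}p^*_{l,n_\nu}/p_{l,n_\nu}\}$ (perturbation of nondegenerate extrema), a Bernstein-type gradient estimate merges the two one-sided families into genuine two-sided bounds of size $O(\tilde r^{\,n_\nu})$ at $\ge 2(n_\nu-l)$ interlaced points; a standard Lagrange-interpolation/Lebesgue-constant estimate for trigonometric polynomials of degree $\le n_\nu-l-1$ then propagates this to the uniform bound $\|T_{n_\nu}-\mathcal{Z}_{n_\nu}\|_\infty=O(\tilde r^{\,n_\nu})$. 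The second identity in (\ref{m2}) follows by substituting $p_l$ for $p_{l,n_\nu}$ in the Blaschke expression and absorbing $\|p_{l,n_\nu}-p_l\|_\infty\to 0$ into the slightly larger exponent $\tilde{\tilde r}$.
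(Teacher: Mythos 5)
Your construction of the competitor $T_{n_\nu}$ by truncating the Fourier series of $z^{n_\nu}p^*_{l,n_\nu}/p_{l,n_\nu}$, the resulting upper bound $\|\mathcal{Z}_{n_\nu}\|\le 1+O(\tilde r^{\,n_\nu})$, and the lower bound via the winding-number count of $2(n_\nu-l)$ alternating extrema played against the zero count of $T_{n_\nu}-\mathcal{Z}_{n_\nu}\in\mathfrak{T}_{n_\nu-k-1}$ are sound and run parallel to the paper (Lemma \ref{proofslemma2.2} on $R_n$, $S_n$ and Lemma \ref{lemma2.4}, where the decomposition $R_n=V_n+\psi_n$ plays the role of your $T_{n_\nu}$). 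The gap is in the step you yourself flag as the ``technical crux.'' First, your merging argument is circular: you place the equioscillation nodes $\varphi^*_i$ of $\mathcal{Z}_{n_\nu}$ within $O(\tilde r^{\,n_\nu/2})$ of the extrema of $\Re\{z^{n_\nu}p_l^*/p_l\}$ by ``perturbation of nondegenerate extrema,'' but that presupposes that $\mathcal{Z}_{n_\nu}$ is already uniformly close to $R_{n_\nu}$, which is exactly the conclusion being proved; a priori nothing is known about where $\mathcal{Z}_{n_\nu}$ equioscillates. (The merging is in fact unnecessary: the one-sided alternating bounds $(-1)^j(T_{n_\nu}-\mathcal{Z}_{n_\nu})(\psi_j)\ge -O(\tilde r^{\,n_\nu})$ at the $2(n_\nu-l)$ \emph{known} near-extrema of $T_{n_\nu}$ already confine $T_{n_\nu}-\mathcal{Z}_{n_\nu}$ to a convex set with trivial recession cone, since $\mathfrak{T}_{n_\nu-l-1}$ is a Haar space of dimension $2(n_\nu-l)-1$; but the size of that set is governed by the strong unicity constant, which brings us to the real issue.)

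Second, and more seriously, whether you phrase it as a Lebesgue-constant estimate at those nodes or as the diameter of that convex set, what you need is a polynomially growing bound on $1/\gamma(R_n,\mathfrak{T}_{n-l-1})$, and this does not follow from ``standard'' interpolation estimates: the nodes are the solutions of $(n-l)\varphi+\arg\bigl(p_{l}^*(e^{i\varphi})/p_{l}(e^{i\varphi})\bigr)\equiv 0\ (\mathrm{mod}\ \pi)$, hence only a perturbation of equidistant nodes, there is one node more than the dimension of the space, and if the relevant constant grew geometrically your entire final estimate would collapse. The paper devotes its longest argument, Lemma \ref{lemma2.5}, precisely to this point: it constructs the dual functions $g_{k,n}$ explicitly from $t_{n-l,n}$, $u_{n-l,n}$ and the Fej\'er--Riesz factors $\hat r_{2l,n}$, $\hat s_{2l,n}$, and bounds $\|g_{k,n}\|\le \mathrm{const}\cdot n$ via a Bernstein inequality for trigonometric polynomials of half argument; the conclusion $\|T_{n_\nu}-\mathcal{Z}_{n_\nu}\|=O(n\tilde r^{\,n_\nu})$, still geometric, then follows from the stability statement of Proposition \ref{prop4.1}. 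Until you supply an argument of this kind, your uniform identification is unproven. A small additional slip: the alternation count for best approximation from $\mathfrak{T}_{n-k-1}$ is $2(n-k)$, and $k\ge l$ gives $2(n-k)\le 2(n-l)$, not $\ge$ as you wrote; this does not affect your zero-counting step, which only uses the extrema of $T_{n_\nu}$.
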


Notation: Let $l,n \in \mathbb N$ and suppose that $p_{l,n}(z) = z^l + ...$
has no zero on $|z|=1$. In the following let, $z=e^{i \varphi}$,
\begin{equation}\label{proofs1}
   R_n(\varphi) = {\rm Re \ } \{ z^{n}
   \frac {p_{l,n}^{*}(z)} {p_{l,n}(z)} \} =
   \frac { {\rm Re \ } \{ z^{n-l}(p_{l,n}^{*}(z))^2 \} } { |p_{l,n}(z)|^2 }
\end{equation}
and
\begin{equation}\label{proofs2}
   S_n(\varphi) = {\rm Im \ } \{ z^{n} \frac {p_{l,n}^{*}(z)} {p_{l,n}(z)} \} =
   \frac { {\rm Im \ } \{ z^{n-l}(p_{l,n}^{*}(z))^2 \} } { |p_{l,n}(z)|^2 }
\end{equation}
Obviously $R_n(\varphi)$ and $S_n(\varphi)$ are rational trigonometric
functions with a trigonometric polynomial of degree $n+l$ in the numerator
and a positive trigonometric polynomial of degree $l$ in the denominator.
Note that
\begin{equation}\label{insert}
   R_n^2(\varphi) + S_n^2(\varphi) = 1
\end{equation}

\begin{lemma}\label{proofslemma2.2}
Let $l,n \in \mathbb N$ with $n>l$ and suppose that $p_{l,n}(z)$ has all
zeros in $|z|\leq \tilde{r}<1$ for all $n \geq n_0.$ Then the following
statements hold for every $n \geq n_1:$ a) Both $R_n$ and $S_n$ have
exactly $2(n-l)$ simple zeros in $[0,2\pi)$ and their zeros strictly
interlace. Furthermore, $R_n(S_n)$ has $2(n-l)$ a-points at the zeros
of $S_n(R_n).$ In particular, $0$ is a best approximation to $R_n$ and
$S_n$ with respect to $\frak{T}_{n-l-1}.$

b) The numerators in \eqref{proofs1} and \eqref{proofs2} can be
represented as follows, $z=e^{i \varphi}$,
\begin{equation}\label{proofs3}
    {\rm Re \ } \{ z^{n-l}(p_{l,n}^{*}(z))^2 \} =
    t_{n-l,n}(\varphi) |\hat{r}_{2l,n}(e^{i \varphi})|^2
\end{equation}
and
\begin{equation}\label{proofs4}
    {\rm Im \ } \{ z^{n-l}(p_{l,n}^{*}(z))^2 \} =
    u_{n-l,n}(\varphi) |\hat{s}_{2l,n}(e^{i \varphi})|^2
\end{equation}
where $t_{n-l}(\varphi), u_{n-l}(\varphi)$ are trigonometric
polynomials of degree $n-l$ which have all their $2(n-l)$ zeros
in $[0, 2\pi)$ and their zeros strictly interlace. Furthermore
$\hat{r}_{2l}(z)$ and $\hat{s}_{2l}(z)$ are monic polynomials
of degree $2l$ which have all their zeros in $|z| < 1.$
\end{lemma}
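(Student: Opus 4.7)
The plan is to study the unimodular function $B_n(z):=z^n p_{l,n}^*(z)/p_{l,n}(z)$, whose real and imaginary parts on $|z|=1$ are $R_n$ and $S_n$, so that $R_n^2+S_n^2=1$. The crux is to prove that $\varphi\mapsto\arg B_n(e^{i\varphi})$ is strictly monotone increasing once $n$ is sufficiently large, uniformly along the subsequence. Writing $\arg B_n(e^{i\varphi})=n\varphi-\arg(p_{l,n}/p_{l,n}^*)(e^{i\varphi})$ and noting that $p_{l,n}/p_{l,n}^*$ is a finite Blaschke product of degree $l$ with zeros $\alpha_{j,n}$ in $|z|\leq\tilde r$, the Schwarz--Pick computation gives
\begin{equation*}
\tfrac{d}{d\varphi}\arg B_n(e^{i\varphi})\;=\;n-\sum_{j=1}^{l}\frac{1-|\alpha_{j,n}|^{2}}{|e^{i\varphi}-\alpha_{j,n}|^{2}}\;\geq\;n-l\cdot\frac{1+\tilde r}{1-\tilde r}\,,
\end{equation*}
which is positive for every $n\geq n_1:=\lceil l(1+\tilde r)/(1-\tilde r)\rceil+1$. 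Applied to $B_n$ on $|z|\leq 1$, which inside the disk has $n$ zeros at the origin and $l$ poles at the $\alpha_{j,n}$, the argument principle then forces $\arg B_n$ to increase by exactly $2\pi(n-l)$ over one period.

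Combined with strict monotonicity, this means $R_n=\cos\arg B_n$ and $S_n=\sin\arg B_n$ each vanish at $2(n-l)$ simple points and reach $\pm 1$ alternately at $2(n-l)$ extrema, with the zeros of one coinciding with the extrema of the other; this yields the zero count and interlacing statements of (a). Moreover $R_n$ exhibits $2(n-l)=2(n-l-1)+2$ alternation points with $\|R_n\|_\infty=1$, so by the Chebyshev alternation theorem on $\frak T_{n-l-1}$ the zero function is its best approximation from $\frak T_{n-l-1}$; the same argument applies to $S_n$.

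For (b), introduce the auxiliary polynomials $P_\pm(z):=z^n p_{l,n}^*(z)\pm ip_{l,n}(z)$ of degree $n+l$; as polynomials $P_+(z)P_-(z)=z^{2n}(p_{l,n}^*(z))^2+p_{l,n}(z)^2$, and using $\overline{p_{l,n}^*(z)}=z^{-l}p_{l,n}(z)$ on $|z|=1$ this equals $2z^{n+l}\Re\{z^{n-l}(p_{l,n}^*(z))^2\}$. On the unit circle $P_+=0$ iff $B_n=-i$ and $P_-=0$ iff $B_n=i$, so by the previous paragraph each has exactly $n-l$ simple circle zeros, together covering all zeros of $R_n$. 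The self-reciprocal identity $P_\pm^*(z):=z^{n+l}\overline{P_\pm(1/\bar z)}=\mp iP_\pm(z)$, obtained by direct substitution, shows that the remaining $2l$ off-circle zeros of $P_\pm$ pair as $\beta\leftrightarrow 1/\bar\beta$, giving $l$ zeros $\beta_j^\pm$ inside the open disk. Setting $\hat r_{2l,n}(z):=\prod_{j=1}^l(z-\beta_j^+)(z-\beta_j^-)$, monic of degree $2l$ with all zeros in $|z|<1$, the on-circle identity $(z-\beta)(z-1/\bar\beta)=-z|z-\beta|^2/\bar\beta$ collapses the off-circle factors of $P_+P_-$ into $z^{2l}|\hat r_{2l,n}(z)|^2$ times a nonzero scalar; dividing by $2z^{n+l}$ yields \eqref{proofs3} with $t_{n-l,n}(\varphi)$ a real trigonometric polynomial of degree $n-l$ whose $2(n-l)$ simple zeros are precisely the zeros of $R_n$. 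The companion identity \eqref{proofs4} is obtained identically using $Q_\pm(z):=z^n p_{l,n}^*(z)\pm p_{l,n}(z)$ and $\hat s_{2l,n}:=\hat Q_+\hat Q_-$, and the strict interlacing of zeros of $t_{n-l,n}$ and $u_{n-l,n}$ is inherited from that of the zeros of $R_n$ and $S_n$. The principal technical hurdle is the uniform monotonicity established in the first paragraph; once that is secured, all zero-counts follow from the argument principle, and the factorizations in (b) reduce to algebraic bookkeeping exploiting the self-reciprocal symmetries of $P_\pm$ and $Q_\pm$.
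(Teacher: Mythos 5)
Your proof is correct, and part (a) follows exactly the paper's route: both arguments rest on the strict monotonicity of $\arg\bigl(z^n p_{l,n}^*(z)/p_{l,n}(z)\bigr)$ on $|z|=1$ with total increase $2\pi(n-l)$, from which the zero counts, the interlacing, and (via the alternation theorem) the best-approximation claim all follow; your Schwarz--Pick estimate $\frac{d}{d\varphi}\arg B_n \geq n - l\frac{1+\tilde r}{1-\tilde r}$ merely makes explicit the "for sufficiently large $n$" that the paper leaves implicit, which is a welcome quantification. For part (b) you diverge slightly: the paper disposes of it in one line by citing the Fej\'er--Riesz representation for nonnegative trigonometric polynomials (implicitly: divide the numerator by the real trigonometric polynomial $t_{n-l,n}$ carrying its sign changes and factor the resulting nonnegative quotient of degree $2l$), whereas you build the factorization by hand from the splittings $z^{2n}(p_{l,n}^*)^2 + p_{l,n}^2 = P_+P_-$ and $z^{2n}(p_{l,n}^*)^2 - p_{l,n}^2 = Q_+Q_-$ together with the self-inversive symmetries $P_\pm^* = \mp i P_\pm$, $Q_\pm^* = \pm Q_\pm$. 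Your route is longer but buys two things: it sidesteps the (unstated) justification that the quotient by $t_{n-l,n}$ is again a trigonometric polynomial, and it yields directly that the zeros of $\hat r_{2l,n}$, $\hat s_{2l,n}$ lie \emph{strictly} inside $|z|<1$, since the off-circle zeros pair as $\beta \leftrightarrow 1/\bar\beta$ and the circle zeros are exhausted by $t_{n-l,n}$ and $u_{n-l,n}$; it also sets up the polynomial $z^{2n}(p_{l,n}^*)^2 + p_{l,n}^2$ that the paper itself exploits in the proof of Lemma \ref{lemma2.3}.
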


\begin{proof}
For simplicity of writing we omit the index $n.$
Concerning part a). Since $p_l^{*}(z)$ has all zeros in
$|z| \geq 1/r >1$ it follows that for sufficiently large $n$ the
$\arg(z^{n}\frac{p_l^{*}(z)}{p_l(z)})$ is strictly monotone
increasing with respect to $\varphi$, where the change of the
argument is $2(n-l)\pi$ when $\varphi$ varies from $0$ to $2\pi.$
Since ${\rm Re } \{ z^{n} \frac{p_l^{*}(z)}{p_l(z)} \}$ and
${\rm Im } \{ z^n \frac{p_l^{*}(z)}{p_l(z)} \}$ are zero, respectively,
if and only if $\arg z^{n} p_l^{*}(z)/p_l(z) = (2k+1)\pi/2$
respectively $k \pi$, where $k \in \mathbb Z,$ the statement
about the number of zeros and on the interlacing property follows.

b) The statements follow immediately by part a) and the F\'ejer-Riesz representation
for nonnegative trigonometric polynomials.
\end{proof}

\begin{lemma} \label{lemma2.3}
Under the assumptions of Theorem \ref{three-modified}
the polynomials $\hat{r}_{2l,n}(z)$ and $\hat{s}_{2l,n}(z)$
associated with $p_{l,n}^{*}(z)$ by \eqref{proofs3} and
\eqref{proofs4}, respectively, satisfy on any compact subset
of $\mathbb C$
\begin{equation}\label{proofsl2}
\begin{aligned}
    \hat{r}_{2l,n}(z) = (p_l(z))^2 + O(r^n) \\
    \hat{s}_{2l,n}(z) = (p_l(z))^2 + O(r^n)
\end{aligned}
\end{equation}
where $p_l$ is given by \eqref{x} and where $0 < r < 1.$
\end{lemma}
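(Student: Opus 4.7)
The plan is to convert the Fejér–Riesz factorization of Lemma \ref{proofslemma2.2} into a clean polynomial identity, identify $\hat r_{2l,n}$ as the monic degree-$2l$ factor carrying the zeros of
\[
P_n(z):=z^{2n}(p_{l,n}^*(z))^2+(p_{l,n}(z))^2
\]
inside $|z|<1$, and then apply Rouché's theorem to show these roots cluster geometrically fast around the roots of $p_l$. The analogue for $\hat s_{2l,n}$ should come from repeating the argument with $Q_n(z):=z^{2n}(p_{l,n}^*(z))^2-(p_{l,n}(z))^2$ in place of $P_n$.

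First I would establish the algebraic identity
\[
P_n(z)=2\,\tilde T_{n-l}(z)\,\hat r_{2l,n}(z)\,\hat r^*_{2l,n}(z).
\]
Using $\overline{p_{l,n}^*(z)}=z^{-l}p_{l,n}(z)$ on $|z|=1$ one checks that $P_n(z)=2z^{n+l}\Re\{z^{n-l}(p_{l,n}^*(z))^2\}$ there; combining this with \eqref{proofs3} and the standard identity $|\hat r(z)|^2=z^{-2l}\hat r(z)\hat r^*(z)$ on the circle, both sides become polynomials in $z$ of degree $2n+2l$ agreeing on $|z|=1$, hence identically. Here $\tilde T_{n-l}(z):=z^{n-l}t_{n-l,n}(\varphi)$ has degree $2(n-l)$ with all its zeros on $|z|=1$.

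Next I would locate the zeros of $\hat r_{2l,n}$: since $\tilde T_{n-l}$ vanishes only on $|z|=1$ and $\hat r_{2l,n}^*$ only in $|z|>1$, the zeros of $\hat r_{2l,n}$ coincide with those of $P_n$ inside the unit disc. Fix $\tilde r$ with $r<\tilde r<1$. By \eqref{x}, the polynomial $p_{l,n}^*$ is uniformly bounded on $|z|\le 1$ and $|(p_{l,n}(z))^2|$ is uniformly bounded below on $|z|=\tilde r$ for large $n$, so
\[
|z^{2n}(p_{l,n}^*(z))^2|\le C\tilde r^{2n}<|(p_{l,n}(z))^2|\qquad\text{on }|z|=\tilde r
\]
for all sufficiently large $n$. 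A Rouché argument on $|z|=\tilde r$ then gives exactly $2l$ zeros of $P_n$ in $|z|<\tilde r$, clustering at the $2l$ zeros of $p_l^2$.

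For the geometric rate I would work locally around each zero $z_j$ of $p_l$ of multiplicity $m_j$: since $(p_{l,n}(z))^2\sim c_{j,n}(z-z_{j,n})^{2m_j}$ near the perturbed zero $z_{j,n}\to z_j$, and the perturbation $z^{2n}(p_{l,n}^*(z))^2$ is of size $O(\tilde r^{2n})$, a local Rouché estimate places each of the $2m_j$ associated zeros of $P_n$ within distance $O(\tilde r^{n/m_j})$ of $z_{j,n}$. As both $\hat r_{2l,n}$ and $p_l^2$ are monic of degree $2l$, coefficient-wise convergence of roots then yields $\hat r_{2l,n}(z)=(p_l(z))^2+O(\tilde r^n)$ uniformly on compacta, after enlarging $\tilde r$ slightly to absorb the factors $1/m_j$ and the residual error $\|p_{l,n_\nu}-p_l\|$ coming from \eqref{x}. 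I expect the main obstacle to be precisely this local rate analysis when $p_l$ has multiple roots, together with combining the two sources of error (the $\tilde r$-decay of the Blaschke perturbation and the convergence of $p_{l,n_\nu}\to p_l$) into a single clean geometric bound.
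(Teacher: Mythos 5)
Your proposal follows essentially the same route as the paper: both proofs introduce the polynomial $P_{2n+2l}(z)=z^{2n}(p_{l,n}^*)^2(z)+(p_{l,n})^2(z)$, identify its factorization on $|z|=1$ via the Fej\'er--Riesz representation \eqref{proofs3} so that $\hat r_{2l,n}$ carries exactly the $2l$ zeros of $P_{2n+2l}$ inside the unit disc, and then apply Rouch\'e's theorem together with \eqref{x} to show these zeros lie within $O(r^n)$ of the zeros of $p_l$, with the $\hat s_{2l,n}$ case handled analogously (your $Q_n=z^{2n}(p_{l,n}^*)^2-(p_{l,n})^2$ is indeed the right companion polynomial). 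Your extra care about multiple zeros of $p_l$ and the merging of the two error sources is a legitimate refinement of a point the paper passes over, but it does not change the argument.
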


\begin{proof}
Let us consider the zeros of the polynomial
\begin{equation}\label{proofsx}
   P_{2n+2l}(z) = z^{2n}(p_{l,n}^{*})^2(z) + (p_{l,n})^2(z).
\end{equation}
Note that, $z=e^{i \varphi},$
\begin{equation}
   P_{2n+2l}(e^{i\varphi}) =
   z^{n + l} {\rm Re \ } \{ z^{n-l}(p_{l,n}^{*}(z))^2 \} =
   z^{n - l} t_{n - l,n}(\varphi) r_{2l, n}(z) r^*_{2 l, n} (z)
\end{equation}
Thus by Lemma \ref{proofslemma2.2} $P_{2n+2l}(z)$ has $2n-2l$
zeros on $|z|=1$ and by the self-reciprocal property $2l$ zeros in
$|z| < 1$ and in $|z| > 1$ which are the zeros of
$\hat{r}_{2l,n}(z)$ and $\hat{r}_{2l,n}^{*}(z),$ respectively. By
assumption \eqref{x}, \eqref{proofsx} and Rouch\'e's theorem it follows
that for $n \geq n_0$ $\hat{r}_{2l,n}(z)$ has (exactly) two zeros in each
neighborhood of a zero of $p_l.$ More precisely, if $v_{j,n}$ is a
zero of $P_{2n+2l}$ from the neighborhood of a zero $z_j$ of $p_l$
we have, by \eqref{proofsx} again, that $|v_{j,n} - z_j| = O(r^n)$
for some $r, 0 < r < 1,$ where $r$ is independent of $j$ and $n.$
Hence
$$ \hat{r}_{2l,n}(z) = p^2_{l}(z) + O(r^n). $$
Analogously the statement for $\hat{s}_{2l,n}$ is proved.
\end{proof}

\begin{lemma}\label{lemma2.4}
Under the assumption of Theorem \ref{three-modified} $R_n$ defined in \eqref{proofs1} is of the form
\begin{equation}\label{proofstilda1}
   R_n(\varphi) = V_n(\varphi) + \psi_n(\varphi)
\end{equation}
where $V_n \in \frak{T}_n$ with $V_n(\varphi) = {\rm Re \ }
\{ \sum_{j=0}^m \bar{\tau}_{j,n} z^{n-j} \} + ...$ and $\psi_n(\varphi)$
is a rational trigonometric function  with
\begin{equation}\label{proofstilda2}
   ||\psi_n|| = O({\tilde{r}}^n)
\end{equation}
where $\max |z_j| < \tilde{r} < 1,$ the $z_j$'s are the zeros of $p_l,$
and the constant in the $O( \ )$ term does not depend on $n.$
\end{lemma}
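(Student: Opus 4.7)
The plan is to expand $p_{l,n}^{*}(z)/p_{l,n}(z)$ as a Laurent series in an annulus containing the unit circle, then to split the resulting expansion for $R_n$ into a degree-$n$ truncation (which becomes $V_n$) and a geometrically small tail (which becomes $\psi_n$).

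Fix $\tilde r\in(r,1)$. By the uniform convergence \eqref{x} and continuity of roots, for all $n\ge n_1$ every zero of $p_{l,n}$ lies strictly inside $|z|<\tilde r$. Hence $p_{l,n}^{*}/p_{l,n}$ is analytic on $|z|\ge\tilde r$ and finite at $\infty$, so it admits the convergent Laurent expansion
\[
\frac{p_{l,n}^{*}(z)}{p_{l,n}(z)} \;=\; \sum_{k\ge 0} a_{k,n}\,z^{-k}, \qquad |z|>\tilde r.
\]
To identify $a_{k,n}=\bar\tau_{k,n}$, I would use the unit-circle identity $\overline{p_{l,n}(z)/p_{l,n}^{*}(z)} = p_{l,n}^{*}(z)/p_{l,n}(z)$, which is immediate from $p^{*}(z)=z^l\overline{p(1/\bar z)}$, and conjugate the expansion \eqref{m1} term by term, matching powers of $z^{-k}$ on the unit circle and extending by analyticity to the whole annulus.

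The key quantitative step is a geometric bound on $|\bar\tau_{k,n}|$ that is \emph{uniform} in $n$. Because $p_{l,n}\to p_l$ uniformly on $|z|=\tilde r$ and $p_l$ has no zero there, $|p_{l,n}(z)|$ is bounded below by a positive constant on this circle for $n\ge n_1$, so $\max_{|z|=\tilde r}|p_{l,n}^{*}/p_{l,n}|\le C$ with $C$ independent of $n$. The Cauchy estimate for Laurent coefficients then yields
\[
|\bar\tau_{k,n}|\;\le\; C\,\tilde r^{\,k}\qquad (k\ge 0,\ n\ge n_1).
\]

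Multiplying the Laurent series by $z^n$ and taking real parts on $|z|=1$ gives
\[
R_n(\varphi)=\Re\Bigl\{\sum_{k=0}^{n}\bar\tau_{k,n}\,e^{i(n-k)\varphi}\Bigr\}+\Re\Bigl\{\sum_{k=n+1}^{\infty}\bar\tau_{k,n}\,e^{i(n-k)\varphi}\Bigr\}.
\]
The first summand is a real trigonometric polynomial in $\frak T_n$ whose top $m+1$ harmonics equal $\Re\{\sum_{j=0}^{m}\bar\tau_{j,n}z^{n-j}\}$; this is the required $V_n$. The second summand is $\psi_n$, and the coefficient bound gives
\[
\|\psi_n\|_\infty\;\le\;\sum_{k\ge n+1}C\,\tilde r^{\,k}\;=\;\frac{C\,\tilde r^{\,n+1}}{1-\tilde r}\;=\;O(\tilde r^{\,n}),
\]
with the $O$-constant independent of $n$. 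Finally, $\psi_n=R_n-V_n$ is a rational trigonometric function because $R_n$ is so by \eqref{proofs1}. The only non-routine point in the plan is making the constant $C$ in the Cauchy bound uniform in $n$; this rests squarely on the uniform convergence \eqref{x} together with the strict separation of the zeros of $p_l$ from the circle $|z|=\tilde r$.
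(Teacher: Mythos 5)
Your proof is correct, but it takes a genuinely different route from the paper's. The paper first performs a Euclidean division of the numerator $\Re\{z^{n-l}(p_{l,n}^*(z))^2\}$ by the positive denominator $|p_{l,n}(e^{i\varphi})|^2$, writing $R_n = V_n + t/|p_{l,n}|^2$ with $V_n\in\mathfrak{T}_n$ and $t\in\mathfrak{T}_{l-1}$, and then controls the remainder by a partial fraction expansion of $\bigl(z^{2n}(p_{l,n}^*)^2 + p_{l,n}^2\bigr)/(p_{l,n}p_{l,n}^*)$: the residues $\lambda_{j,n} = z_{j,n}^{n}\,p_{l,n}^*(z_{j,n})/p_{l,n}'(z_{j,n})$ and their reflected counterparts $\beta_{j,n}$ are $O(\tilde r^{\,n})$ because the poles $z_{j,n}$ stay in $|z|\le r<\tilde r$ by \eqref{x}; the leading coefficients of $V_n$ are then read off from \eqref{m1}. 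Your Laurent-series argument reaches the same decomposition by expanding $p_{l,n}^*/p_{l,n}$ in powers of $z^{-1}$ on $|z|>\tilde r$, identifying the coefficients as $\bar\tau_{k,n}$ via the reflection identity on $|z|=1$, and truncating at $k=n$; the tail bound comes from a Cauchy estimate that is uniform in $n$, again thanks to \eqref{x}. The two computations are essentially dual (expanding each partial fraction $\lambda_{j,n}/(z-z_{j,n})$ in powers of $z^{-1}$ reproduces your tail), but your version has two small advantages: it does not need the paper's auxiliary assumption that $p_{l,n}$ has simple zeros, and it isolates the uniform coefficient decay $|\bar\tau_{k,n}|\le C\,\tilde r^{\,k}$ explicitly. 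What the paper's version buys instead is an explicit closed form for $\psi_n$ as a rational trigonometric function with fixed denominator $|p_{l,n}(e^{i\varphi})|^2$ and numerator in $\mathfrak{T}_{l-1}$, though only the bound $\|\psi_n\|=O(\tilde r^{\,n})$ is used in the sequel, so your argument delivers everything the later proofs require.
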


\begin{proof}
By Euclid $R_n$ from \eqref{proofs1} can be written in the form
\begin{equation}\label{xx}
   R_n(\varphi) = \frac{V_n(\varphi) |p_{l,n}(e^{i \varphi})|^2 +
   t(\varphi)}{|p_{l,n}(e^{i \varphi})|^2},
\end{equation}
where $V_n \in \mathfrak{T}_n$ and $t \in \mathfrak{T}_{l-1}.$
Putting, $z = e^{i \varphi},$
\begin{equation}\label{proofstilda3}
   e^{in\varphi}V_n(\varphi) = z^n P_n^*(z) + P_n(z)
\end{equation}
we obtain by \eqref{proofs1} and partial fraction
expansion, assuming that $p_{l,n}$ has simple zeros, that
\begin{equation}\label{proofstilda4}
\begin{aligned}
    \frac{z^{2n}(p_{l,n}^*)^2(z) + p_{l,n}^2(z)}{p_{l,n}(z) p_{l,n}^*(z)}
    & = z^n P_n^*(z) + P_n(z) + \\
    & + z^{n} \left( \sum_{j=1}^l \frac{\lambda_{j,n}}{z-z_{j,n}} +
      \sum_{j=1}^l \frac{\beta_{j,n}}{z-\frac{1}{ \overline{z}_{j,n} }}, \right)
\end{aligned}
\end{equation}
where
$$ \lambda_{j,n} = z_{j,n}^{n} \frac{p_{l,n}^{*}(z_{j,n})}{p_{l,n}^{'}(z_{j,n})}
   = O(\tilde{r}^n) $$
and analogously
$$ \beta_{j,n} =
      \bar{z}_{j,n}^{n}
        \frac
           { p_{l,n} ( \frac{1}{\bar{z}_{j,n}} ) }
           { (p_{l,n}^{*})^{'} ( \frac{1}{ \bar{z}_{j,n}} ) }
   = O(\tilde{r}^n)
$$
where for the last equalities we took \eqref{x} into consideration.
By \eqref{xx} and \eqref{proofstilda4} it follows that
$$ z^{2n} \frac{p_{l,n}^{*}}{p_{l,n}} + \frac{p_{l,n}}{p_{l,n}^{*}} =
   z^{n} P_{n}^{*} + P_{n} + O(z^{n+l}), $$
hence by \eqref{m1}
   $$ P_n(z) = \sum_{j=0}^m \tau_{j,n} z^j + ... $$
which gives by \eqref{proofstilda3} the assertion on the leading
coefficients of $V_n.$
\end{proof}

\begin{notation} Let $G$ be a linear space of $C[a,b]$ and $f\in C[a,b]$ with
$0$ as a best approximation. Denote by $\gamma (f,G)$ the strong unicity
constant of $f$ with respect to $G$ \ie
\begin{equation}
     \begin{aligned}
                    \gamma (f,G) & = \inf \frac{||f-g|| - ||f||}{||g||}\\
                    & = \inf_{||g||=1} \max_{y\in E(f)} {\rm sgn}(f(y)) g(y)
     \end{aligned}
     \label{eq-pl6}
\end{equation}
where $E(f)$ denotes the set of extremal points. If $G$ is a Haar space then by
\cite{Cli}
\begin{equation}
    \gamma (f,G) = 1/\max_{1\leq k\leq n}||g_k||
    \label{eq-pl7}
\end{equation}
where $g_k \in G$ is the polynomial given by $g_k(y_j) = (-1)^j$,
$j=1,\ldots,n+1; j\neq k$ and the $y_j$ denote the $a$-points of $f$.
\end{notation}

\begin{lemma}\label{lemma2.5}
$$1/\gamma (R_n, \frak{T}_{n-l-1}) = O(n) $$
\end{lemma}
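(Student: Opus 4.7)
The plan is to identify the alternation set of $R_n$ as the zeros of $S_n$, show that these alternation points are nearly equally spaced in $[0,2\pi)$ with mesh $\pi/n + O(1/n^2)$, and then bound $\max_k\|g_k\|$ by the Lebesgue constant of trigonometric interpolation at such nodes.

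By identity \eqref{insert} and Lemma \ref{proofslemma2.2}, the alternation points of $R_n$ are exactly the $2(n-l)$ simple zeros $y_1 < \cdots < y_{2(n-l)}$ of $S_n$ in $[0,2\pi)$; this count equals $\dim\mathfrak{T}_{n-l-1}+1$, so formula \eqref{eq-pl7} applies. Writing $R_n + iS_n = e^{i\theta_n(\varphi)}$ one has $\theta_n(\varphi) = n\varphi - \arg(p_{l,n}/p_{l,n}^{*})(e^{i\varphi})$, and since $p_{l,n}/p_{l,n}^{*}$ is a finite Blaschke product of degree $l$ with zeros $z_{j,n}$ in $|z|\leq\tilde{r}<1$, its argument derivative $\sum_{j=1}^{l}(1-|z_{j,n}|^2)/|e^{i\varphi}-z_{j,n}|^2$ is uniformly bounded by $2l/(1-\tilde{r})$. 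Hence $\theta_n'(\varphi) = n + O(1)$, and since $\theta_n$ advances by exactly $\pi$ across each interval $(y_j, y_{j+1})$, one gets $y_{j+1}-y_j = \pi/n + O(1/n^2)$.

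The polynomial $g_k\in\mathfrak{T}_{n-l-1}$ is then given by the trigonometric Lagrange formula
\[
    g_k(\varphi) = \sum_{i\neq k}(-1)^i \ell_i(\varphi), \qquad
    \ell_i(\varphi) = \prod_{\substack{j\neq i\\ j\neq k}} \frac{\sin((\varphi-y_j)/2)}{\sin((y_i-y_j)/2)},
\]
with each $\ell_i\in\mathfrak{T}_{n-l-1}$ (product of $2(n-l)-2$ sine factors). Hence $\|g_k\| \leq \Lambda_n := \max_\varphi \sum_{i\neq k}|\ell_i(\varphi)|$. For almost equispaced trigonometric nodes, even with one doubled gap created by omitting $y_k$, the classical Dirichlet-kernel analysis (combined with the identity $\prod_{k=1}^{2m}\sin(k\pi/(2m+1)) = (2m+1)/4^m$ and a bounded-mesh-ratio perturbation) gives $\Lambda_n = O(\log n)$, so by \eqref{eq-pl7}, $1/\gamma(R_n,\mathfrak{T}_{n-l-1}) = \max_k\|g_k\| = O(\log n) = O(n)$.

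The main technical obstacle will be the Lebesgue constant estimate for the perturbed, punctured equispaced node system, which must accommodate simultaneously the $O(1/n^2)$ deviations of the $y_j$'s and the doubled gap at the excised node $y_k$. The saving feature is that the mesh ratio of $\{y_j : j\neq k\}$ remains bounded by $2+o(1)$ and the cardinality matches $\dim\mathfrak{T}_{n-l-1}$, so the standard $O(\log n)$ Dirichlet bound transfers with only a constant loss, comfortably yielding the required $O(n)$ estimate.
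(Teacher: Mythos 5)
Your framework is reasonable and close in spirit to the paper's: the paper also uses the strong unicity formula \eqref{eq-pl7} at the $2(n-l)$ alternation points of $R_n$ (the zeros of $S_n$), exploits their near-equidistribution, and builds the $g_k$'s semi-explicitly. But the key quantitative step of your argument is false. The Lebesgue constant of the punctured node system is \emph{not} $O(\log n)$; it is of exact order $n$. To see this, write $\Omega(\varphi)=\prod_{j}\sin\bigl((\varphi-y_j)/2\bigr)$ for the full node function and evaluate your Lagrange basis at the excised node: a short computation gives $\ell_i(y_k)=-\Omega'(y_k)/\Omega'(y_i)$. For near-equispaced nodes all the $|\Omega'(y_i)|$ are comparable (in the exactly equispaced model $\Omega=c\sin\bigl((n-l)\varphi\bigr)$ they are all equal), so $\sum_{i\neq k}|\ell_i(y_k)|\approx 2(n-l)-1$. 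Thus the ``bounded mesh ratio, constant loss'' heuristic is exactly where the argument breaks: removing one node from an equispaced system costs a factor of $n$ in the Lebesgue constant, not a constant, and in fact $g_k(y_k)=\sum_{i\neq k}(-1)^i\ell_i(y_k)$ itself has modulus $\approx 2(n-l)-1$, so $\|g_k\|\gtrsim n$ and no $O(\log n)$ bound can hold. (This is consistent with the lemma being sharp: the strong unicity constant here really decays like $1/n$.)

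The final conclusion $O(n)$ is still true and is even reachable along your route, but you have not proved it: after the false $O(\log n)$ claim there is no estimate of $\sum_{i\neq k}|\ell_i(\varphi)|$ at intermediate distances from $y_k$, where the naive bound $|\ell_i(\varphi)|\le|\ell_i^{\mathrm{full}}(\varphi)|\,\bigl|\sin((y_i-y_k)/2)/\sin((\varphi-y_k)/2)\bigr|$ must be summed with care to avoid an $O(n\log n)$ loss. The paper sidesteps the Lebesgue-constant computation entirely: it exhibits $g_{k,n}$ so that $h_{k,n}(\varphi)=\sin\bigl(\tfrac{\varphi-\varphi_{k,n}}{2}\bigr)g_{k,n}(\varphi)$ is uniformly bounded in $n$ and vanishes at $\varphi_{k,n}$, and then gets $\|g_{k,n}\|\le \|h_{k,n}'\|\cdot O(1)\le C\,n$ from Bernstein's inequality; the factor $n$ enters through Bernstein, not through interpolation theory. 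To repair your proof you would either need to carry out the full Lebesgue-function estimate (splitting the sum according to the distance of $y_i$ from $y_k$ and from $\varphi$) and show it is $O(n)$, or switch to a Bernstein-type argument as in the paper.
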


\begin{proof}
Let
\begin{equation}\label{proofsx-1}
  {\mathfrak G}_{2(n-l)-1} := \left\{ q(\varphi) \frac{ |\hat{r}_{2l,n}(e^{i\varphi})|^2 }
  { |p_{l,n}(e^{i\varphi})|^2 } : q \in {\mathfrak T}_{n-l-1} \right\}
\end{equation}
where $\hat{r}_{2l,n}$ is the polynomial associated with $p_{l,n}$ by
\eqref{proofs3}. By Lemma \ref{proofslemma2.2} $t_{n-l,n}$ can be written
in the form, $z=e^{i \varphi},$
\begin{equation}\label{proofsx0}
  \begin{aligned}
     t_{n-l,n}(\varphi) = {\rm Re \ } \{ c_n z^{n-l} + ... \}
     & = \frac{c_n}{2} z^{-(n-l)} \prod_{\nu=1}^{2(n-l)} (z-e^{i \psi_{\nu,n}}) \\
     & = \frac{|c_n|}{2} (2i)^{2(n-l)} \prod_{\nu=1}^{2(n-l)}
       \sin \frac{\varphi-\varphi_{\nu,n}}{2}
  \end{aligned}
\end{equation}
where we used the fact that
\begin{equation}\label{19_2}
   e^{i \arg c_n} = e^{-i \sum\limits_{\nu=1}^{2(n-l)} \frac{\psi_{\nu,n}}{2}}
\end{equation}
Note that by \eqref{proofsx0} and \eqref{19_2}
\begin{equation}\label{19_3}
   t_{n-l,n}(\varphi) = |c_n| ( \cos ((n-l)\varphi -
   \sum_{\nu=1}^{2(n-l)} \frac{\psi_{\nu,n}}{2} ) + q(\varphi) )
\end{equation}
where $q \in \mathfrak{T}_{n-l-1}.$
Analogously we obtain
\begin{equation}\label{proofu}
  \begin{aligned}
   u_{n-l,n}(\varphi) = {\rm Im \ } \{ d_n z^{n-l} + ... \}
   & = \frac{d_n}{2} z^{-(n-l)} \prod\limits_{\nu=0}^{2(n-l)}
       (z-e^{i \varphi_{\nu,n}}) \\
   & = \frac{|d_n|}{2} (2i)^{2(n-l)} \prod\limits_{\nu=1}^{2(n-l)}
       \sin \frac{\varphi-\varphi_{\nu,n}}{2}
  \end{aligned}
\end{equation}
where $$e^{i \arg d_n} = e^{i \left( \frac{\pi}{2} -
\sum\limits_{\nu=1}^{2(n-l)} \frac{\varphi_{\nu,n}}{2} \right) }, $$
moreover
$$ u_{n-l,n}(\varphi) = |d_n|  \sin ((n-l)\varphi + \frac{\pi}{2} -
\sum_{\nu=1}^{2(n-l)} \frac{\varphi_{\nu,n}}{2} )  $$
Taking into consideration \eqref{proofsl2} we have
\begin{equation}\label{proofsx1}
    \lim\limits_n \ d_n = \lim\limits_n \ c_n \neq 0
\end{equation}

Next let us put for $k = 1, ... , 2n-2l, k \neq 1,$
\begin{equation}
\begin{split}
       q_{k,n}(\varphi) =
       & t_{n-l,n}(\varphi) + \\
       & \frac{|c_n|}{|d_n|}
         \frac{u_{n-l,n}(\varphi) \left( \sin \left( \varphi +
         \sum\limits_{\nu=1, \nu \neq 1,k}^{2(n-l)}
         \frac{\varphi_{\nu,n}}{2} -
         \sum\limits_{\nu=1}^{2(n-l)} \frac{\psi_{\nu,n}}{2} - \frac{\pi}{2} \right)
         - e_{k,n} \right) }
         { (-2) \sin \left( \frac{\varphi - \varphi_{1,n}}{2} \right)
         \sin \left( \frac{\varphi - \varphi_{k,n}}{2} \right) }
\end{split}
\end{equation}
where
\begin{equation}\label{proofsx2}
   e_{k,n} = \sin \left( \varphi_{1,n} +
   \sum_{\nu=1,\nu \neq 1,k}^{2(n-l)} \frac{\varphi_{\nu,n}}{2} -
   \sum_{\nu=1}^{2(n-l)} \frac{\psi_{\nu,n}}{2}
   - \frac{\pi}{2} \right)
\end{equation}
i.e. the second factor of the numerator is of the form
$const$ $\sin \left( \frac{\varphi - \varphi_{1,n}}{2} \right)$
$\sin \left( \frac{\varphi - \eta}{2} \right).$ Moreover the
second expression at the right hand side in \eqref{proofsx}
is from $\frak{T}_{n-l}.$ Since
$$ \frac{u_{n-l,n}(\varphi)}{(2i)^2 \sin \left( \frac{\varphi -
   \varphi_{1,n}}{2} \right) \sin \left( \frac{\varphi -
   \varphi_{k,n}}{2} \right)} =
   |d_n| {\rm Im \ } \{ e^{i( (n-l-1) \varphi + \frac{\pi}{2}-
   \sum\limits_{\nu=1, \nu \neq 1,k}^{2(n-l)} \frac{\varphi_{\nu,n}}{2}
   )} \}
$$
it follows by straightforward calculation that the second
expression at the right hand side of \eqref{proofsx} is of the form
$-|c_n| \cos \left( (n-l)\varphi - \sum\limits_{\nu=1}^{2(n-l)}
\frac{\psi_{\nu,n}}{2} \right) + q, q \in \frak{T}_{n-l-1}$;
hence it follows by \eqref{19_3} that $q_{k,n} \in \frak{T}_{n-l-1}$
and therefore
\begin{equation}\label{}
   g_{k,n}(\varphi) = q_{k,n}(\varphi) \frac{|\hat{r}_{2l,n}(e^{i\varphi})|^2}
   {|p_{l,n}(e^{i\varphi})|^2}
\end{equation}
has the properties that $g_{k,n} \in {\mathfrak G}_{2(n-l)-1}$ and, by \eqref{proofsx}
and Lemma \ref{proofslemma2.2} a)
\begin{equation}\label{}
   \pm g_{k,n}(\varphi_{\nu,n}) = (-1)^{\nu}  \ \ \ \ \nu = 1, ..., 2(n-l), \nu \neq k.
\end{equation}
Furthermore,
\begin{equation}\label{}
   h_{k,n}(\varphi) = \sin \left( \frac{\varphi - \varphi_{k,n}}{2} \right)
   g_{k,n}(\varphi)
\end{equation}
is uniformly bounded on $[0,2\pi]$ with respect to $n.$ Indeed,
$t_{n-l,n}|\hat{r}_{2l,n}|^2 / |p_{l,n}|^2$ $ = R_n(\varphi)$ and
$u_{n-l,n}|\hat{s}_{2l,n}|^2 / |p_{l,n}|^2 = S_n(\varphi)$ are
bounded by one. Further $|\hat{r}_{2l,n} / p_{l,n}|$ as well as,
recall \eqref{proofsl2}, $| \hat{r}_{2l,n} / \hat{s}_{2l,n} |$
are uniformly bounded on $|z|=1$, since $p_l(e^{i\varphi})$ is,
by assumption, bounded away from zero on $|z|=1,$ the boundedness
of $h_{k,n}$ follows by the choice \eqref{proofsx2} of $e_{k,n}$
in \eqref{proofsx} and \eqref{proofsx1}. Now $h_{k,n}(\varphi)$
is a trigonometric polynomial of half argument for which Bernstein's
inequality for the derivative still holds, therefore
\begin{equation}\label{}
\begin{aligned}
   ||g_{k,n}||
   & = \left|\left| \frac {h_{k,n}(\varphi) -
       h_{k,n}(\varphi_k)} {\varphi - \varphi_k} .
       \frac {\varphi - \varphi_k}
       { \sin \left( \frac{\varphi - \varphi_k}{2} \right) } \right|\right| \\
   & \leq ||h_{k,n}|| \left|\left| \frac {\varphi - \varphi_k}
     { \sin \left( \frac{\varphi - \varphi_k}{2} \right) } \right|\right| \leq
const. n
\end{aligned}
\end{equation}
Thus
$$1/\gamma (R_n; {\mathfrak G}_{2(n-l)-1}) \leq const.n  $$
Using the simple fact that $0$ is a best approximation to $R_n$ from
${\mathfrak G}_{2(n-l)-1}$ as well as from $\frak{T}_{n-l-1}$ we obtain by \eqref{} and
\eqref{proofsx-1} that
$$ \gamma(R_n;{\mathfrak G}_{2(n-l)-1}) \leq  ||\frac{\hat{r}_{2l,n}}{p_{l,n}}||^2
\gamma(R_n;\frak{T}_{n-l-1}) $$
hence
$$ 1/\gamma (R_n; \frak{T}_{n-l-1}) \leq \widetilde{const}. n   $$
\end{proof}

%%%%%%%%%%%%%%%%%%%%%%%%%%%%%%%%%%%%%%%%%%%%%%%%%%%%%%%%%%%%%%%%%%%%%%%%%%%%%%%%%%%%%%%%%%%%%%%%%%%

%%%%%%%%%%%%%%%%%%%%%%%%%%%%%%%%%%%%%%%%%%%%%%%%%%%%%%%%%%%%%%%%%%%%%%%%%%%%%%%%%%%%%%%%%%%%%%%%%%%%%%%%%%%%%%%%%%%%%%%%%%%%%%
\section{Proof of Theorem \ref{three} and Theorem \ref{three-modified}}

\begin{prop}\label{prop4.1}
Let $n,j(n) \in \mathbb N.$ Let $\tilde{t}_{j(n)}(\varphi)$ be a best
approximation to $f_n \in C_{2\pi}$ on $[0,2\pi)$ with respect to the
linear subspace $G_{j(n)}$ and let $0$ be a best approximation from
$G_{j(n)}$ to $h_n \in C_{2\pi}$ on $[0,2\pi).$ Suppose that $||h_n||=1$
and that
\begin{equation}\label{sa11}
   f_n(\varphi) - t_{j(n)}(\varphi) = h_n(\varphi) + \epsilon_n(\varphi),
\end{equation}
where $t_{j(n)} \in G_{m(n)}.$ If $\gamma(h_n;G_{j(n)}) ||\epsilon_n||
\underset { n \to \infty } \longrightarrow 0$ then for $\varphi \in [0, 2 \pi]$
\begin{equation}\label{sa12}
   f_n(\varphi) - \tilde{t}_{j(n)}(\varphi) = h_n(\varphi) +
   O (||\epsilon_n||\gamma(h_n;G_{j(n)}))
\end{equation}
and in particular
$$ t_{j(n)}(\varphi) - \tilde{t}_{j(n)}(\varphi) =
   O (||\epsilon_n||(1+\gamma(h_n;G_{j(n)})))$$
\end{prop}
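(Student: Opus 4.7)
The plan is to combine the minimality of $\tilde t_{j(n)}$ with the strong-unicity inequality for $h_n$ at its best approximation $0$. Setting $g_n := \tilde t_{j(n)} - t_{j(n)} \in G_{j(n)}$ and substituting into \eqref{sa11} gives
$$f_n - \tilde t_{j(n)} = h_n + \epsilon_n - g_n,$$
so \eqref{sa12} is equivalent to an $O(\cdot)$ estimate on $\epsilon_n - g_n$; the ``in particular'' statement will then follow at once from $t_{j(n)} - \tilde t_{j(n)} = -g_n$ and the triangle inequality.

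Next I would bracket $||f_n - \tilde t_{j(n)}||$. The upper bound comes from the minimality of $\tilde t_{j(n)}$:
$$||f_n - \tilde t_{j(n)}|| \le ||f_n - t_{j(n)}|| = ||h_n + \epsilon_n|| \le 1 + ||\epsilon_n||.$$
For the lower bound I invoke the strong-unicity inequality implicit in \eqref{eq-pl6}: since $0$ is the best approximation to $h_n$ from $G_{j(n)}$, the definition of $\gamma$ gives, for every $g \in G_{j(n)}$,
$$||h_n - g|| \ge ||h_n|| + \gamma(h_n;G_{j(n)})\,||g|| = 1 + \gamma(h_n;G_{j(n)})\,||g||.$$
Specializing to $g = g_n$ and combining with $||f_n - \tilde t_{j(n)}|| \ge ||h_n - g_n|| - ||\epsilon_n||$ yields
$$||f_n - \tilde t_{j(n)}|| \ge 1 + \gamma(h_n;G_{j(n)})\,||g_n|| - ||\epsilon_n||.$$

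Comparing the upper and lower estimates produces the key norm bound on $g_n$ in terms of $||\epsilon_n||$ and $\gamma(h_n;G_{j(n)})$, which when inserted back into $f_n - \tilde t_{j(n)} - h_n = \epsilon_n - g_n$ delivers \eqref{sa12}. The hypothesis $\gamma(h_n;G_{j(n)})\,||\epsilon_n||\to 0$ ensures that this comparison is asymptotically meaningful and that the size of $g_n$ is genuinely controlled by the perturbation $\epsilon_n$ modulated by the strong-unicity constant of $h_n$, rather than by any drift in the alternation structure.

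The main subtlety is the correct reading of the strong-unicity definition \eqref{eq-pl6} as a uniform linear lower bound on $||h_n - g||$; once this is in hand everything else is elementary norm manipulation, and the ``in particular'' bound is just the triangle inequality applied to $t_{j(n)} - \tilde t_{j(n)} = -g_n = -(\epsilon_n - g_n) + \epsilon_n$. The Proposition will subsequently be applied to $f_n = \mathcal Z_{n_\nu}$, $t_{j(n)} = V_{n_\nu}$ from Lemma \ref{lemma2.4} and $h_n = R_{n_\nu}$, with the required control on $\gamma(h_n;G_{j(n)})$ furnished by Lemma \ref{lemma2.5}.
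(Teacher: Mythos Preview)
Your argument is correct and is essentially the paper's own proof: bracket $\|f_n-\tilde t_{j(n)}\|$ from above by minimality of $\tilde t_{j(n)}$ and from below via the strong-unicity inequality for $h_n$, then read off the bound on $g_n=\tilde t_{j(n)}-t_{j(n)}$. The paper merely organizes the same two ingredients slightly differently, first recording $|E_{j(n)}(f_n)-1|\le\|\epsilon_n\|$ and then invoking strong unicity, but the content is identical.

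One caveat worth flagging: with the definition \eqref{eq-pl6} you (correctly) obtain $\|g_n\|\le 2\|\epsilon_n\|/\gamma(h_n;G_{j(n)})$, whereas the statement and the paper's displayed inequality write the bound as $O(\|\epsilon_n\|\,\gamma)$; the paper is tacitly using $\gamma$ there for the \emph{reciprocal} of the quantity defined in \eqref{eq-pl6} (consistently with Lemma~\ref{lemma2.5}, where it is $1/\gamma$ that is shown to be $O(n)$). Also, in your closing remark the roles are slightly off: in the application one takes $f_n=\Re\{\sum_{j=0}^k\bar\tau_{j,n}z^{n-j}\}$ and then $\mathcal Z_n=f_n-\tilde t_{j(n)}$, not $f_n=\mathcal Z_n$.
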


\begin{proof}
\begin{equation}\label{sa13}
   |E_{j(n)}(f_n) - 1| \leq ||\epsilon_n||
\end{equation}
since $$ E_{j(n)}(f_n) \leq ||f_n - t_{j(n)}|| \leq ||h_n|| +
||\epsilon_n|| = 1 + ||\epsilon_n|| $$
and
$$ 1 = E_{j(n)}(h_n) = ||h_n|| \leq ||f_n - t_{j(n)} -
   \epsilon_n - ( \tilde{t}_{j(n)} - t_{j(n)} )|| \leq E_{j(n)}(f_n) + ||\epsilon_n||$$
Next let us show that
$$ ||\tilde{t}_{j(n)} - t_{j(n)}|| \leq ( 1 + 2\gamma(h_n;G_{j(n)}) ) ||\epsilon_n||$$
Indeed, by the definition \eqref{eq-pl6} of the strong uniqueness constant
$$\begin{aligned}
      ||\tilde{t}_{j(n)} - t_{j(n)}||
      & \leq \gamma(h_n;G_{j(n)})
        \left( ||h_n - (\tilde{t}_{j(n)} - t_{j(n)}) || - ||h_n|| \right) \\
      & \leq \gamma(h_n;G_{j(n)})
        (E_{j(n)}(f_n) + ||\epsilon_n|| - ||h_n|| )  \leq 2
        \gamma(h_n;G_{j(n)}) ||\epsilon_n||
\end{aligned}$$
where we have used \eqref{sa11} and the fact that $||h_n|| = 1$ and
\eqref{sa13} in the second and third inequality, respectively.
\end{proof}

\begin{proof}[of Theorem \ref{three}] Put $k(n) = n - k - 1,
G_{k(n)} = {\mathfrak T}_{n - k - 1}$ and $k \in \{ l, ..., m \}$ fixed,
and
$$ h_n(\varphi) = R_n(\varphi) \ {\rm and \ } f_n(\varphi) =
   {\rm Re \ } \{ \sum\limits_{j=0}^k \bar{\tau}_{j,n} z^{n-j} \}.$$
Recall that $0$ is a best approximation with respect to
${\mathcal T}_{n - l - 1}$ hence with respect to
${\mathcal T}_{n - k - 1}$ for $k \in \{ l, ..., n - 1 \}.$ By
Lemma \ref{lemma2.4},
$$ V_n = f_n - t_{k(n)} \ {\rm and \ } R_n = f_n - t_{k(n)} + \psi$$
where $t_{k(n)} \in {\mathfrak T}_{n - j - 1}$ and $||\psi_n|| = O(r^n).$
Since $f_n - \tilde{t}_{k(n)} = {\mathcal Z}_n(x; \bar{\tau}_{0,n} , ... ,
\bar{\tau}_{j,n} )$ Theorem \ref{three-modified} follows by Proposition
\ref{prop4.1} in conjunction with Lemma \ref{proofslemma2.2} and Lemma \ref{lemma2.5}.
\end{proof}

%%%%%%%%%%%%%%%%%%%%%%%%%%%%%%%%%%%%%%%%%%%%%%%%%%%%%%%%%%%%%%%%%%%%%%%%%%%%%%%%%%%%%%%%%%%%%%%%%%%%%%%%%%%%%%%%%%%%%%%%%%%%%%%%%%%
\section{Extremal problems on coefficients and Clenshaw's conjecture}

First let us recall how to solve the following classical
problem in complex function theory. Let
$\mu_0, ... , \mu_l \in \mathbb C$ be given. Among all
$f \in H^{\infty}$ with
\begin{equation}\label{new2}
   f(z) = \sum_{j=0}^{\infty} a_j z^{j} \ {\rm and \ } |f(z)|
   \leq 1 \ {\rm for \ } |z| < 1
\end{equation}
find
\begin{equation}\label{new47}
\eta_l := \max_ {a_j} | \mu_l a_0 + \mu_{l-1} a_1 + ... + \mu_0
a_l |.
\end{equation}

By \cite{Ger, Lemma } for given $\mu_0,...,\mu_l
\in \mathbb C$ there exist polynomials $s_{l - \nu}(z)$ and $r_\nu(z)$
of degree $l - \nu$ and $\nu$ respectively, which have all zeros in
$|z|<1$ such that
\begin{equation}\label{56n}
\begin{aligned}
  \mu_0 + \mu_1 z + ... + \mu_l z^l + ...
     & = (s^{*}_{l - \nu}(z))^2 r_\nu^*(z) r_\nu(z) \\
     & =: h_{2l}^*(z)
\end{aligned}
\end{equation}
F. Riesz \cite{Rie} has shown that among all functions $g \in H^1$
with $g(z) = \mu_0 + \mu_1 z + ... + \mu_l z^l + ... $ the mean
modulus $\frac{1}{2\pi} \int_0^{2\pi} |g(e^{i\varphi})| d\varphi$
is minimal for $h_{2l}^*(z).$ With the help of this result Szasz
\cite{} derived that
\begin{equation}\label{eta}
  \eta_l = \frac{1}{2\pi} \int_0^{2\pi} |h_{2l}^*(e^{i\varphi})| d\varphi
\end{equation}
and that equality in \eqref{new47} is attained by
\begin{equation}\label{eq-70}
   f(z) = e^{i\gamma} \frac{s_{l - \nu}(z)}{s_{l - \nu}^*(z)}
        = c_0 + c_1 z + ... + c_l z^l + ...
\end{equation}
We mention that \eqref{eta} holds, since Szasz's problem
\eqref{new2}-\eqref{new47} and the problem considered by F. Riesz
are dual problems, see \cite{Gar}.

The simple case $\nu = 0$ appears if the expansion in $z = 0$
\begin{equation*}
   \sqrt{\mu_0 + \mu_1 z + ... + \mu_l z^l} =
   \sum_{j = 0}^\infty \lambda_j z^j
\end{equation*}
is such that
\begin{equation*}
   s_l^* (z) = \sum_{j = 0}^l \lambda_\nu z^\nu
\end{equation*}
has no zero in $|z| < 1.$ Then
\begin{equation*}
    \eta_l = \sum_{j = 0}^l |\lambda_j|^2 = \frac{1}{2 \pi} \int_0^{2 \pi}
    |s_l(e^{i \varphi})|^2 d \varphi
\end{equation*}
and
\begin{equation*}
   f(z) = e^{i \gamma} \frac{s_l(z)}{s_l^*(z)}
\end{equation*}
is the function for which the maximum \eqref{new47} is attained.

The special case $\mu_j = 1, j=0,...,l,$ i.e. the determination of
the maximum of the sum of coefficients (which fits into the case just
considered), has been first considered and solved by an ad hoc method
in the celebrated paper by Landau \cite{}. Landau has shown that
\begin{equation}\label{D3}
  |a_0 + a_1 + ... + a_l| \leq 1 + \sum_{j=1}^{l}
  \left( \frac{1.3. \ ... \ . (2j -1)}{2.4. \ ... \ .2j} \right)^2
\end{equation}
and that equality holds only for
\begin{equation}\label{D4}
  F(z) = e^{i \gamma} \frac{\sum\limits_{\nu=1}^{l} (-1)^\nu
  {\binom {-\frac 12}\nu} z^{l - \nu} }{ \sum\limits_{\nu=1}^{l}
  (-1)^\nu \binom {-\frac 12}\nu z^{\nu} }
\end{equation}
where $\gamma \in \mathbb R.$

The other simple case $\nu = l$ appears if the $\mu_j$'s are such that
\begin{equation}
    {\rm Re \ } \{ \mu_0 e^{i l \varphi} + ... + \mu_{l - 1} e^{i \varphi}
    + \frac{\mu_l}{2} \} \geq 0 {\rm \ on \ } [0, 2\pi]
\end{equation}
Then
\begin{equation*}
    \eta_l = \mu_l
\end{equation*}
and equality is attained in \eqref{new47} for $f(z) = \varepsilon,$
$|\varepsilon| = 1.$

Here we study the following problem: Let $\mu_0,...,\mu_l \in \mathbb C$
be given. How large can be $|\sum_{j = 0}^l \mu_{l - j} \tau_j|$ if
$|| {\rm Re \ } \{ \sum_{j = 0}^n \bar{\tau}_j z^{n - j} \} || \leq 1$
and $n$ is large. In other words: among all upper bounds $L$ such that
for each $n \in \mathbb N$ and for every $(\tau_0,...,\tau_l,..., \tau_n)
\in {\mathbb C}^n$
\begin{equation}\label{eq-m1}
     |\sum_{j = 0}^l \mu_{l - j} \tau_j| \leq L
     || {\rm Re \ } \{ \sum_{j = 0}^n \bar{\tau}_j e^{i(n - j) \varphi} \} ||
\end{equation}
find the least upper bound.

We point out that by \eqref{new2} and \eqref{new47} $\eta_l$ is such an upper bound
$L$ in \eqref{eq-m1}. As we show in the next theorem it is even the least upper bound.

\begin{thm}\label{thm-mi2}
The least upper bound in \eqref{eq-m1} is given by $\eta_l(\mu_0,...,\mu_l).$
\end{thm}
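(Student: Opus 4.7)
The plan is to prove the reverse inequality $L \ge \eta_l$, since the passage preceding the theorem has already recorded that $\eta_l$ is an upper bound in \eqref{eq-m1}. I will exhibit a sequence of admissible tuples $(\tau_0,\ldots,\tau_n)$ whose ratio
$$ \frac{|\sum_{j=0}^l \mu_{l-j}\tau_j|}{||\Re\{\sum_{j=0}^n \bar{\tau}_j e^{i(n-j)\varphi}\}||} $$
tends to $\eta_l$ as $n\to\infty$, which together with the known upper bound pins down $L=\eta_l$.

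The natural candidate comes from the Sz\'asz extremal function \eqref{eq-70}, namely
$$ f(z) = e^{i\gamma}\,\frac{s_{l-\nu}(z)}{s_{l-\nu}^*(z)} = c_0 + c_1 z + c_2 z^2 + \cdots, $$
for which $|f|\equiv 1$ on $|z|=1$, $s_{l-\nu}$ has all its zeros in $|z|<1$, and $|\mu_l c_0 + \mu_{l-1} c_1 + \cdots + \mu_0 c_l| = \eta_l$. For each $n>l$ I set $\bar{\tau}_j := \bar{c}_j$ for $j=0,\ldots,l$ and choose $\bar{\tau}_{l+1},\ldots,\bar{\tau}_n$ so that the resulting trigonometric polynomial of degree $n$ is the Zolotarev minimizer $\mathcal{Z}_n(\varphi;\bar{c}_0,\ldots,\bar{c}_l)$. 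By construction the numerator in the displayed ratio equals the constant $\eta_l$ for every such $n$.

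For the denominator I apply Theorem \ref{Sch} to the block $(c_0,\ldots,c_l)$: since $f$ is already in Schur-canonical form with $p_{l'}=s_{l-\nu}$ and multiplier $\gamma_0 = e^{i\gamma}$, the uniqueness part of Theorem \ref{Sch} identifies the Schur data exactly, and in particular $|\gamma_0|=1$. The asymptotic formula \eqref{eq-x1} (which combines Theorem \ref{three} with Theorem \ref{Sch}) then yields
$$ ||\mathcal{Z}_n(\varphi;\bar{c}_0,\ldots,\bar{c}_l)|| \longrightarrow 1 $$
with geometric speed. Consequently the ratio above tends to $\eta_l$, forcing $L \ge \eta_l$; combined with $L \le \eta_l$ this gives $L = \eta_l$.

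The only nontrivial step is the identification of the Schur data via the uniqueness clause of Theorem \ref{Sch}; once that is in place, Theorem \ref{three} does all of the asymptotic work, and the conclusion follows as an essentially immediate corollary of the main theorem of the paper. No separate alternation-theoretic argument or ad hoc construction of extremal polynomials is needed, and the role of the Zolotarev-type approximation result is precisely to convert the classical $H^\infty$-duality bound $\eta_l$ from a one-sided estimate into a sharp one.
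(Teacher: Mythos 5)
Your proposal is correct and follows essentially the same route as the paper: both take the upper bound $L\le\eta_l$ as already established from the Sz\'asz/Riesz duality discussion, and both prove sharpness by feeding the Taylor coefficients $c_0,\ldots,c_l$ of the extremal Blaschke product $e^{i\gamma}s_{l-\nu}/s_{l-\nu}^*$ into Theorem \ref{three} (via \eqref{eq-x1}) to conclude that $\|\mathcal{Z}_n(\varphi;\bar c_0,\ldots,\bar c_l)\|\to 1$ geometrically while the functional stays equal to $\eta_l$. The only cosmetic difference is your detour through a ``uniqueness clause'' of Theorem \ref{Sch}, which the paper skips by applying Theorem \ref{three} directly to the Blaschke product with unimodular multiplier.
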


\begin{proof}
First we note that by \eqref{new2} for every $n \in \mathbb N$
\begin{equation*}
    |\sum_{j = 0}^l \mu_{l - j} \tau_j| \leq \eta_l {\rm \ \ \ if \ \ \ }
    || {\rm Re \ } \{ \sum_{j = 0}^n  \bar{\tau}_j e^{i (n - \varphi )}\} || \leq 1
\end{equation*}
Thus we have to show that the upper bound $\eta_l$ cannot be improved.
By \eqref{eq-70}  we know that $\max\limits_{\tau_j} | \sum_{j = 1}^l \mu_{l - j} \tau_j | =
\eta_l$ is attained for a Blaschke product
\begin{equation}
    e^{i \kappa} \frac{s_{l - \nu}(z)}{s_{l - \nu}^*(z)} =
    c_0 + c_1 z + c_2 z^2 + ... + c_l z^l + ...
\end{equation}
Thus it follows by Theorem \ref{three}
\begin{equation*}
    {\mathcal Z}_n(\varphi; \bar{c}_0,...,\bar{c}_l) =
    {\rm Re \ } \{ z^n e^{i \kappa} \frac{s_{l - \nu}(z)}{s_{l - \nu}^*(z)} \}
    + O(\tilde{r}^n)
\end{equation*}
with, setting ${\boldsymbol c} = (c_0,...,c_l),$
\begin{equation*}
    1 - \varepsilon_n \leq
    || {\mathcal Z}_n(\varphi; \bar{\boldsymbol c}) ||
    \leq 1 + \varepsilon_n,
    {\rm \ i.e. \ }
    \frac{1}{1 - \varepsilon_n}
    \leq
    E_n(\bar{\boldsymbol c})
    \leq
    \frac{1}{1 + \varepsilon_n}
\end{equation*}
and $\varepsilon_n \to 0$ geometrically. Hence
\begin{equation*}
    \frac{|\sum_{j = 0}^l \mu_{l - j} c_j|}
         {|| {\mathcal Z}_n(\varphi; \bar{\boldsymbol c}) ||}
    \leq
    \frac{\eta_l}{1 + \varepsilon_n}
    {\rm \ with \ } \varepsilon_n \to 0
    {\rm \ geometrically}
\end{equation*}
which proves the theorem.
\end{proof}

\begin{cor}\label{cor6.2}
Let $\mu_0,...,\mu_l \in \mathbb C$ be given. Then for every
$(\tau_0,...,\tau_l) \in {\mathbb C}^l$ and for every $n \in \mathbb N$
\begin{equation}
    || {\rm Re \ } \{ \sum_{j = 0}^l  \mu_{l - j} \tau_j e^{i(n - j) \varphi} \}||
    \leq
    \eta_l
    || {\rm Re \ } \{ \sum_{j = 0}^n  \bar{\tau}_j e^{ i (n - j) \varphi} \} ||
\end{equation}
and the constant $\eta_l$ cannot be improved.
\end{cor}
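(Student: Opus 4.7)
The plan is to deduce the inequality from Theorem~\ref{thm-mi2} by a phase-translation trick, and to obtain the sharpness from the extremal Blaschke product already used in its proof.

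Abbreviate
\begin{equation*}
   Q(\varphi) := \Re\bigl\{\textstyle\sum_{j=0}^l \mu_{l-j}\tau_j e^{i(n-j)\varphi}\bigr\},\qquad
   P(\varphi) := \Re\bigl\{\textstyle\sum_{j=0}^n \bar\tau_j e^{i(n-j)\varphi}\bigr\}.
\end{equation*}
Fix an arbitrary $\varphi_0\in[0,2\pi]$; it is enough to establish the pointwise bound $|Q(\varphi_0)|\le\eta_l\|P\|$. Introduce the phase-rotated coefficients $\sigma_j:=\tau_j e^{i(n-j)\varphi_0}$, $j=0,\dots,n$, and note that
\begin{equation*}
   \tilde P(\varphi):=\Re\bigl\{\textstyle\sum_{j=0}^n\bar\sigma_j e^{i(n-j)\varphi}\bigr\}
     =\Re\bigl\{\textstyle\sum_{j=0}^n\bar\tau_j e^{i(n-j)(\varphi-\varphi_0)}\bigr\}=P(\varphi-\varphi_0),
\end{equation*}
so that $\|\tilde P\|=\|P\|$ by the translation invariance of $\|\cdot\|_{[0,2\pi]}$. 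Applying Theorem~\ref{thm-mi2} to the vector $(\sigma_0,\dots,\sigma_n)$ yields $|\sum_{j=0}^l\mu_{l-j}\sigma_j|\le\eta_l\|\tilde P\|=\eta_l\|P\|$, and by construction $|Q(\varphi_0)|\le|\sum_{j=0}^l\mu_{l-j}\tau_j e^{i(n-j)\varphi_0}|=|\sum_{j=0}^l\mu_{l-j}\sigma_j|$. Taking the supremum over $\varphi_0$ concludes the inequality.

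For sharpness, let $(c_0,\dots,c_l)$ be the coefficient vector of the Szasz extremal Blaschke product in \eqref{eq-70}, so that $|\sum_{j=0}^l\mu_{l-j}c_j|=\eta_l$. Set $\beta:=\arg\bigl(\sum_{j=0}^l\mu_{l-j}c_j\bigr)$ and $\tau_j:=e^{-i\beta}c_j$ for $j=0,\dots,l$, which makes $\sum_{j=0}^l\mu_{l-j}\tau_j=\eta_l$ real and positive; hence $Q(0)=\eta_l$ and $\|Q\|\ge\eta_l$. Complete $(\tau_0,\dots,\tau_l)$ by choosing $\tau_{l+1},\dots,\tau_n$ so that $P$ becomes the trigonometric Zolotarev polynomial $\mathcal Z_n(\varphi;\bar\tau_0,\dots,\bar\tau_l)$. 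The asymptotic \eqref{eq-x1}, applied to the rotated Blaschke ratio $e^{-i\beta}\cdot e^{i\kappa}s_{l-\nu}/s_{l-\nu}^{*}$, whose modulus remains one on $|z|=1$, gives $\|P\|\to 1$ with geometric rate. Consequently $\|Q\|/\|P\|\to\eta_l$, so $\eta_l$ is indeed the best possible constant.

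The only step requiring any care is the phase bookkeeping in the identity $\tilde P(\varphi)=P(\varphi-\varphi_0)$; once that is in hand, the functional-looking inequality of the corollary collapses to a single instance of the scalar bound in Theorem~\ref{thm-mi2}, and no substantive obstacle is anticipated.
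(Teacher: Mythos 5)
Your proof is correct and follows essentially the same route as the paper: the pointwise bound $|Q(\varphi_0)|\le\eta_l\|P\|$ via the phase-rotated coefficients $\tau_j e^{i(n-j)\varphi_0}$ together with translation invariance of the sup-norm is exactly the paper's argument (there written with $\bar\tau_j e^{-i(n-j)\psi}$ and ``$2\pi$-periodicity''), and the sharpness is likewise referred back to the extremal Blaschke product of Theorem~\ref{thm-mi2}. Your write-up merely makes the phase bookkeeping and the sharpness step more explicit than the paper does.
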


\begin{proof}
For any $\psi \in [0,2\pi]$ and for every $n \in \mathbb N$
\begin{equation*}
\begin{split}
      \sum_{j = 0}^l \mu_{l - j} \tau_j e^{i(n - j)\psi}
          & \leq \eta_l \min_{\substack{\tau_j \\ l + 1 \leq j \leq n}}
                 || {\rm Re \ } \{ \sum_{j = 0}^n \bar{\tau}_j e^{- i (n - j)\psi} e^{i(n - j)\varphi} \} || \\
          & = \eta_l \min_{\substack{\tau_j \\ l + 1 \leq j \leq n}}
              || {\rm Re \ } \{ \sum_{j = 0}^n \bar{\tau}_j e^{i(n - j)\varphi} \} || \\
\end{split}
\end{equation*}
where in the first inequality the upper bound $\eta_l$ is best possible
by Theorem \ref{thm-mi2} and the last equality follows by $2 \pi$-periodicity.
\end{proof}

As a consequence of Corollary \ref{cor6.2} and Landau's results \eqref{l1}
we obtain a proof of Clenshaw's conjecture \cite{} from the sixties of the last century.

\begin{notation}
For given ${\boldsymbol \tau} = (\tau_0,..., \tau_l) \in {\mathbb C}^{l + 1}$ let
\begin{equation*}
     E_n({\boldsymbol \tau}) = || {\mathcal Z}_n(\varphi; {\boldsymbol \tau}) ||_{[0,2\pi]}
\end{equation*}
the minimum deviation.
\end{notation}

\begin{thm}
Clenshaw's conjecture holds, that is, for any $\tau \in {\mathbb R}^{l + 1}$
\begin{equation}\label{eq-cle}
     \lim_{n \to \infty}
     \left(
          || \sum_{j = 0}^l \tau_j \cos (n - j) \varphi ||/E_n(\tau)
     \right)
     \leq
          1 + \sum_{j = 1}^l
          \left( \frac{1. 3. \cdots (2 j - 1)}{2. 4. ... 2 j} \right)^2
\end{equation}
where in \eqref{eq-cle} equality is attained.
\end{thm}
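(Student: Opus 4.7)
The plan is to derive the theorem as a direct corollary of Corollary~\ref{cor6.2} applied with $\mu_0 = \mu_1 = \cdots = \mu_l = 1$, combined with the closed-form evaluation of $\eta_l$ in this special case recorded just after~\eqref{D3}. For this choice the extremal constant $\eta_l$ is exactly Landau's constant
\[
     L_l := 1 + \sum_{j=1}^l \left( \frac{1\cdot 3\cdots (2j-1)}{2\cdot 4\cdots 2j} \right)^2 ,
\]
and the extremal function~\eqref{D4} is a Blaschke product $f(z) = s_l(z)/s_l^*(z) = c_0 + c_1 z + c_2 z^2 + \cdots$ whose Taylor coefficients $c_j$ are real once one takes $\gamma = 0$ in~\eqref{D4} (since $s_l^*$ then has real coefficients) and which, by the Szasz characterization, satisfy $|c_0 + c_1 + \cdots + c_l| = L_l$.

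For the inequality in~\eqref{eq-cle} I would simply apply Corollary~\ref{cor6.2} with $\mu_j = 1$ to an arbitrary real vector $\tau = (\tau_0,\ldots,\tau_l)$. The left-hand side of the corollary then reduces to $||\sum_{j=0}^l \tau_j \cos(n-j)\varphi||$ because $\tau_j \in \mathbb R$, and taking the infimum over the free coefficients on the right yields $L_l\, E_n(\tau)$. Hence
\[
      ||\sum_{j=0}^l \tau_j \cos(n-j)\varphi|| \leq L_l\, E_n(\tau)
\]
already for every finite $n$, which after dividing by $E_n(\tau)$ gives the asserted upper bound.

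For the attainment of equality I would take $\tau = (c_0,\ldots,c_l)$. Theorem~\ref{three} applied to these leading coefficients gives $E_n(c) = 1 + O(\tilde r^n)$, so the denominator of the ratio tends to $1$. To analyze the numerator I set $Q(z) := c_0 + c_1 z + \cdots + c_l z^l$; writing $z = e^{i\varphi}$ one has
\[
     \sum_{j=0}^l c_j \cos(n-j)\varphi = \mathrm{Re}\,\bigl\{z^n Q(\bar z)\bigr\} = |Q(e^{-i\varphi})|\,\cos\bigl(n\varphi + \arg Q(e^{-i\varphi})\bigr) .
\]
The envelope $|Q(e^{-i\varphi})|$ is fixed and continuous in $\varphi$, while the phase $n\varphi + \arg Q(e^{-i\varphi})$ winds arbitrarily fast as $n\to\infty$, so a routine density-of-orbits argument gives $||\sum_{j=0}^l c_j \cos(n-j)\varphi|| \to \max_{|\zeta|=1}|Q(\zeta)|$ as $n\to\infty$. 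Combining this with the inequality of the previous paragraph applied to $\tau = c$ forces $\max_{|\zeta|=1}|Q(\zeta)| \leq L_l$, while $|Q(1)| = |c_0 + \cdots + c_l| = L_l$ gives the reverse inequality. Hence the ratio tends to $L_l$, proving attainment.

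The only substantive analytic point, and the expected main obstacle, is the asymptotic identity $\max_{\varphi}|\mathrm{Re}\{z^n Q(\bar z)\}| \to \max_{|\zeta|=1}|Q(\zeta)|$; it is where the passage from the uniform-in-$n$ inequality to equality in the limit actually happens, and it requires a short but careful argument using uniform continuity of $Q$ on the unit circle together with the fact that the phase can be brought within $O(1/n)$ of any prescribed value. Everything else is a direct invocation of Corollary~\ref{cor6.2}, Theorem~\ref{three}, and the Landau--Szasz material collected around~\eqref{D3}--\eqref{D4}.
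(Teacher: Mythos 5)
Your derivation of the inequality in \eqref{eq-cle} is exactly the paper's route: specialize Corollary \ref{cor6.2} (equivalently Theorem \ref{thm-mi2}) to $\mu_0=\cdots=\mu_l=1$ and identify $\eta_l((1,\dots,1))$ with Landau's constant via \eqref{D3}. Where you diverge is on the attainment of equality: the paper's written proof consists only of the displayed chain of inequalities giving the upper bound and never actually argues attainment (it is implicitly delegated to the sharpness statement of Theorem \ref{thm-mi2}), whereas you supply a complete argument by taking $\tau=(c_0,\dots,c_l)$ from the Landau extremal Blaschke product \eqref{D4} with $\gamma=0$, using Theorem \ref{three} to get $E_n(c)\to 1$, and then showing the numerator tends to $\max_{|\zeta|=1}|Q(\zeta)|=L_l$. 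That argument is correct, but the density-of-phases step you flag as the main obstacle is avoidable: since the $c_j$ are real, evaluating at $\varphi=0$ gives $\bigl|\sum_{j=0}^l c_j\cos((n-j)\cdot 0)\bigr|=|c_0+\cdots+c_l|=L_l$ directly, so $\|\sum_j c_j\cos(n-j)\varphi\|\ge L_l$ for every $n$, which together with the uniform upper bound $L_l E_n(c)=L_l(1+o(1))$ and $E_n(c)\to 1$ already forces the ratio to converge to $L_l$. The only point you share with the paper in glossing over is the verification that the Landau numerator polynomial $s_l$ has all its zeros strictly inside the unit disc (needed to invoke Theorem \ref{three}); this holds, e.g.\ by the Enestr\"om--Kakeya theorem applied to the positive decreasing coefficients $\binom{2\nu}{\nu}4^{-\nu}$, and is worth a sentence.
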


\begin{proof}
Put $\mu_l = \mu_{l - 1} = \cdots = \mu_0 = 1.$ Then, as in the proof of Corollary
\ref{cor6.2}, we have for $\psi \in [- \pi, \pi]$ or equivalently for any $-\psi \in [- \pi, \pi]$
\begin{equation*}
\begin{aligned}
     & || {\rm Re \ } \{ \sum_{j = 0}^l \bar{\tau}_j  e^{i (n - j) \psi} \} || \leq
       | \sum_{j = 0}^l \tau_j e^{- i (n - j) \psi } | \leq \\
     & \eta_l((1,\cdots, 1))
       \min_{l+1 \leq j \leq n} || {\rm Re \ } \{ \sum_{j = 0}^n \bar{\tau}_j  e^{i (n - j) \varphi} \} || =
       \eta_l((1,\cdots, 1))E_n(\bar{\boldsymbol \tau})  \\
\end{aligned}
\end{equation*}
\end{proof}

\begin{thm}\label{thm3.4}
For any $\varepsilon > 0$ and all $n, \ n \geq n_0(\varepsilon),$
any $\mu_0,...,\mu_n, \ \tau_0,...,\tau_n \in \mathbb C$
\begin{equation}\label{C.4}
\begin{aligned}
   4 ||{\rm Re} \ \{ \sum\limits_{j=0}^{l} \mu_{l-j} \tau_j
   e^{i(n-j)\varphi}\}|| \leq
   & \left( ||{\rm Re} \ \{ \sum\limits_{j=0}^{n} \mu_{l-j} \tau_j
   e^{i(n-j)\varphi}\}||_1 + \varepsilon \right) \\
   & \ \ \ .||{\rm Re} \ \{ \sum\limits_{j=0}^{n} \bar{\tau}_j
   e^{i(n-j)\varphi}\}||,
\end{aligned}
\end{equation}
where $||f(\varphi)||_1 = \int_{0}^{2\pi} |f(\varphi)|d\varphi.$
\end{thm}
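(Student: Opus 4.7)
The plan is to establish \eqref{C.4} as an asymptotic $L^1$-refinement of the $L^\infty$-bound in Corollary \ref{cor6.2}. The key observation is that the factor $4$ equals $\int_0^{2\pi}|\cos\varphi|\,d\varphi$, which strongly suggests the inequality should saturate on the Blaschke-product extremizers of Theorem \ref{thm-mi2}.

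First I would reduce to the extremal configuration: by the sharpness argument in the proof of Theorem \ref{thm-mi2}, the worst case for \eqref{C.4} occurs when the leading coefficients $(\tau_0,\ldots,\tau_l)$ equal, up to normalization, the Taylor coefficients $c_0,\ldots,c_l$ of the Blaschke product $f = e^{i\gamma}s_{l-\nu}/s_{l-\nu}^*$ of \eqref{eq-70}, while $\tau_{l+1},\ldots,\tau_n$ are chosen to minimize the rightmost factor of \eqref{C.4}. Theorem \ref{three} then yields
\[
   \mathcal{Z}_n(\varphi;\bar c_0,\ldots,\bar c_l) = \Re\{z^n f(z)\} + O(\tilde r^n),
\]
and since $|f(e^{i\varphi})|\equiv 1$ on the unit circle, this polynomial equals $\cos(n\varphi + \arg f(e^{i\varphi})) + O(\tilde r^n)$. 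Its $L^\infty$-norm is therefore $1 + O(\tilde r^n)$, matching the rightmost factor of \eqref{C.4}.

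Next, since $\arg f(e^{i\varphi})$ is smooth in $\varphi$ (as $f$ has no zeros or poles on $|z|=1$), a Riemann--Lebesgue / equidistribution argument applied to $|\cos|$ yields
\[
   \int_0^{2\pi}\!|\cos(n\varphi + \arg f(e^{i\varphi}))|\,d\varphi \longrightarrow 2\pi\cdot\frac{2}{\pi}=4 \quad\text{as } n\to\infty.
\]
Consequently, the $L^1$-norm of $\mathcal{Z}_n(\varphi;\bar c)$ tends to $4$, so \eqref{C.4} becomes an equality in the limit for the extremal data. For general $(\tau_0,\ldots,\tau_l)$ one then combines Corollary \ref{cor6.2} (bounding the left-hand side by $\eta_l$ times the rightmost $L^\infty$-factor) with a homogeneity / convexity reduction to the extremal case already analyzed, absorbing the $O(\tilde r^n)$ errors and the $o(1)$ loss in the oscillatory integral into the $\varepsilon$-term for $n \geq n_0(\varepsilon)$.

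The main obstacle I anticipate is making this reduction quantitative: Corollary \ref{cor6.2} provides only the multiplicative $\eta_l$-bound and not the precise $L^1$-dependence on $(\tau_0,\ldots,\tau_l)$ needed here. A careful perturbation argument around the Blaschke extremizer will be required, presumably using the strong-unicity bound of Lemma \ref{lemma2.5} to transfer smallness from the $L^\infty$ to the $L^1$ scale, together with a uniform (van der Corput type) estimate on the oscillatory integral as the phase $\arg f(e^{i\varphi})$ varies over a compact parameter family.
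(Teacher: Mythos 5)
There is a genuine gap, and your route diverges from the paper's at the decisive point. The content of \eqref{C.4} is a \emph{lower} bound for the middle factor: after Corollary \ref{cor6.2} bounds the left-hand side by $4\eta_l$ times the rightmost sup-norm, what remains is to show that the $L^1$-norm of \emph{any} trigonometric polynomial with the prescribed leading coefficients is at least $4\eta_l-\varepsilon$ for $n$ large. The paper does exactly this: it solves the $L^1$-Zolotarev problem, proving
\begin{equation*}
   \min_{t\in\frak{T}_{n-l-1}}\int_0^{2\pi}\Bigl|\,{\rm Re}\{\textstyle\sum_{j=0}^l\mu_jz^{n-j}\}+t(\varphi)\Bigr|\,d\varphi
   =\int_0^{2\pi}\bigl|{\rm Re}\{z^{n-2l}s_l^2(z)\}\bigr|\,d\varphi
   =4\sum_{j=0}^l|\lambda_j|^2=4\eta_l
\end{equation*}
asymptotically, via the classical sign-orthogonality criterion $\int_0^{2\pi}e^{ik\varphi}\,{\rm sgn}\,{\rm Re}\{e^{i(n-2l)\varphi}s_l^2(e^{i\varphi})\}\,d\varphi=0$ for $k\le n-l-1$, which is verified using the identity $\tfrac4\pi\,{\rm Re}\arctan z={\rm sgn}\cos\varphi$ and the factorization \eqref{56n}; the value $4\eta_l$ then comes out of the Fourier expansion of $|\cos|$ together with F.~Riesz's theorem / the duality \eqref{eta}. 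This is where the factor $4$ really originates.

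Your proposal never produces this lower bound. You compute the $L^1$-norm of the $L^\infty$-minimal polynomial ${\mathcal Z}_n(\varphi;\bar c)$ for the special Blaschke data and find it tends to $4$; but that is a different polynomial (its leading coefficients are the $\bar c_j$, not the products $\mu_{l-j}\tau_j$ appearing in the middle factor), and even for the right polynomial an evaluation at one configuration is not an inequality valid for all $\mu_j,\tau_j$ and all admissible trailing coefficients. The step you defer --- the ``homogeneity / convexity reduction to the extremal case'' --- is the entire theorem: the three factors in \eqref{C.4} are norms of three different polynomials built from the same data, and there is no convexity structure that lets you verify the inequality only at a conjectured extremizer; doing so is circular absent a variational argument. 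Lemma \ref{lemma2.5} (strong unicity) and a van der Corput estimate do not substitute for the missing $L^1$-minimality statement; what is needed is the F\'ejer--Riesz-type representation \eqref{56n} together with the sign-characterization of $L^1$-best approximation, i.e.\ the duality between the coefficient functional $\eta_l$ and the $L^1$-minimum deviation.
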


If the $\mu_j$'s and $\tau_j$'s are real for $j=0,...,l$ then the
estimate \eqref{C.4} cannot be improved in the sense, that for
given $\mu (\taub)$ there exists a $\taub (\mu)$ such that
equality is attained $n \to \infty.$

\begin{proof}{Proof of Theorem \ref{thm3.4}:} \\
Case 1. $s_l(z)$ satisfies condition ??.\\
By ??
\begin{equation}
    z^{n-2l} s_l^2(z) = \sum_{j=0}^{l} \mu_j z^{n-j} + O(z^{n-l-1}).
\end{equation}
We claim that
\begin{equation}\label{newtilda1}
  \begin{split}
    & \min\limits_{t \in {\frak T}_{n-l-1}} \int_0^{2\pi}
    |{\rm Re} \{ \sum\limits_{j=0}^l \mu_j z^{n-j} \} + t(\varphi)|d\varphi
    = \int_0^{2\pi} |{\rm Re} \{ z^{n-2l} s_l^2(z) \} | d\varphi
    \\
    & = \frac{2}{\pi} \int_0^{2\pi} |s_l(z)|^2 d\varphi = 4
    \sum_{j=0}^l |\lambda_j|^2
  \end{split}
\end{equation}
from which the assertion follows by recalling \eqref{C.1}. Re $\{
z^{n-2l} s_l^2(z) \}$ deviates least from zero with respect to
$L_1$-norm on $[0,2\pi]$ among all trigonometric polynomials of
the form Re $\{ \sum\limits_{j=0}^{l} \mu_j z^{n-j} +
t(\varphi)\},$ $t \in {\frak T}_{n-l-1}.$ As it is well known it
suffices to show that
\begin{equation}\label{newtilda2}
  \int_0^{2\pi} e^{ik\varphi} \ {\rm sgn \ Re }
  \ \{ e^{i(n-2l)\varphi} s_l^2(e^{i \varphi})
  \} d\varphi = 0 \ {\rm for} \ k = 0, ... , n-l-1.
\end{equation}
Obviously
$$
\begin{aligned}
   & {\rm sgn \ Re} \ \{ e^{i(n-2l)\varphi} s_l^2(e^{i\varphi}) \} =
     {\rm sgn \ Re} \ \{ e^{i(n-l)\varphi}
     \frac{s_l(e^{i\varphi})}{s_l^*(e^{i\varphi})} \}\\
   & = {\rm sgn} \ \cos ((n-l)\varphi + \Phi(\varphi)) =
     \frac{4}{\pi}{\rm Re}\
     \arctan e^{i((n-l)\varphi + \Phi(\varphi))} \\
   & = \frac{4}{\pi} {\rm Re} \ \arctan z^{n-l} \frac{s_l(z)}{s_l^*(z)}
     = \frac{4}{\pi} z^{n-l}
     \frac{s_l(z)}{s_l^*(z)}+ O(z^n),
\end{aligned}
$$
where we used the fact that, $z=e^{i\varphi},$
$$ \frac{4}{\pi} {\rm Re} \ \arctan z = \frac{4}{\pi} \arg
\frac{i-z}{i+z} = {\rm sgn } \ \cos \varphi;$$
hence \eqref{newtilda2} follows and thus the first equality in
\eqref{newtilda1} is proved. Next we observe that
$$
\begin{aligned}
    & | {\rm Re} \{ z^{n-2l} s_l^2(z) \} | = | s_l(z) |^2 |
      {\rm Re} \{ z^{n-l} \frac{s_l(z)}{s_l^*(z)} \} | \\
    & = |s_l(z)|^2 |\cos ((n-l)\varphi + \Phi(\varphi))| =
      |s_l(z)|^2 {\rm Re} \{ \frac {2}{\pi} + O(z^{2(n-l)}) \}
\end{aligned}
$$
where the last equality follows by Fourier-expansion.
\end{proof}

%%%%%%%%%%%%%%%%%%%%%%%%%%%%%%%%%%%%%%%%%%%%%%%%%%%%%%%%%%%%%%%%%%%%%%%%%%%%%%%%%%%%%%%%%%%%%%%%%%%%%%%%%%

\end{document}